\newenvironment{proof}{{\noindent \it Proof.}}{\hfill $\blacksquare$\par}
\newtheorem{theorem}{Theorem}[section]
\newtheorem{lemma}[theorem]{Lemma}
\newtheorem{corollary}[theorem]{\rm\bfseries Corollary}
\begin{document}

\title{On the first three minimum Mostar indices of tree-like phenylenes}
\author{Hechao Liu$^{1}$, Lihua You$^{1,}$\thanks{Corresponding author}, Hanlin Chen$^{2}$, Zikai Tang$^{3}$
 \\
{\small $^1$School of Mathematical Sciences, South China Normal University,}\\ {\small Guangzhou, 510631, P. R. China}\\
 \small {\tt hechaoliu@m.scnu.edu.cn},\quad  \small {\tt ylhua@scnu.edu.cn}\\
{\small $^2$College of Computer Engineering and Applied Mathematics, Changsha University,}\\ {\small Changsha, Hunan 410022, P. R. China}\\
\small {\tt hlchen@ccsu.edu.cn}\\
{\small $^3$School of Mathematics and Statistics, Hunan Normal University,}\\ {\small Changsha, Hunan 410081, P. R. China}\\
\small {\tt zikaitang@163.com}
}
\date{}
\maketitle
\begin{abstract}
Let $G =(V_{G}, E_{G})$ be a simple connected graph with its vertex set $V_{G}$ and edge set $E_{G}$.
The Mostar index $Mo(G)$ was defined as $Mo(G)=\sum\limits_{e=uv\in E(G)}|n_{u}-n_{v}|$, where $n_{u}$ (resp., $n_{v}$) is the number of vertices whose distance to vertex $u$ (resp., $v$) is smaller than the distance to vertex $v$ (resp., $u$). In this study, we determine the first three minimum Mostar indices of tree-like phenylenes and characterize all the tree-like phenylenes attaining these values. At last, we give some numerical examples and discussion.
\end{abstract}
\noindent{\bf Keywords}: Mostar index; Tree-like phenylene; Extremal value.

\noindent{\bf 2020 Mathematics Subject Classification}: 05C09, 05C92.
\maketitle

%%%%%% THIS PART MUST BE PLACED IMMEDIATELY AFTER THE \maketitle COMMAND
%%%%%% BACK TO ORIGINAL FOOTNOTES
\makeatletter
\renewcommand\@makefnmark%
{\mbox{\textsuperscript{\normalfont\@thefnmark)}}}
\makeatother
%%%%%%

 \baselineskip=0.30in

\section{Introduction}
\hskip 0.6cm
Chemical indices are a class of numerical invariants that are closely related to the structure of a chemical graph. Chemical indices can be used to predicte the structural and physic-chemical properties of a chemical molecular. They are mainly used for the quantitative characterisation of chemical structures and thus contribute to the study of QSAR, QSTR and QSPR relationships of chemical structures. In recent years, chemical indices have been found a wide range of applications in chemical science, medical science, complex networks, toxicology, etc. The combination of
quantum chemistry and chemical graph theory has also become a promising area in the study of QSAR and QSPR.

Our convention in this paper follows \cite{frkc1997} for notations that we omit here.
Phenylenes are a class of chemical compounds in which the carbon atoms form 6-membered cycles and 4-membered cycles. Each 4-membered cycle(=square) is adjacent to two disjoint 6-membered cycles(=hexagons), and no two hexagons are adjacent \cite{pagu1997}. If there is a hexagon of phenylene adjacent to three squares, we call it the tree-type phenylene (the definition of tree-type phenylene is similar to tree-type hexagonal system, see \cite{dngl2020}). Related structures include extended sp-carbon nets and heterocyclic analogs. These molecules have great theoretical and potential practical significance in finding new molecules with (super) conductive properties \cite{kpcv1993}.

Nowadays, phenylene is still a hot topic in many experimental and theoretical studies.
Some topological properties of phenylenes has been established such as (total $\pi$-electron) energy \cite{gufa2017}, HOMO LUMO separation \cite{gupk1996}, cyclic conjugation \cite{gtto2001}, Kekul\'{e} structure count\cite{gtfu2006}, Wiener index \cite{fgzv2002}, PI index \cite{dcz2007,guar2008}, Detour index \cite{lfan2021} and Kirchhoff index \cite{lzch2020,zzli2019}.

Let $\mathscr{P}_h$ be the set of tree-like phenylenes with $h$ hexagons and $h-1$ squares. And $\mathscr{P}=\bigcup_{h=1}^{\infty}\mathscr{P}_h$.
Suppose $P\in \mathscr{P}_h$, and $R$ is a hexagon of $P$. Then $R$ is called an $i$-hexagon, if it has exactly $i$ $(0\leq i \leq 3)$ adjacent squares in $P$. A $1$-hexagon is called a terminal hexagon of $P$. A $2$-hexagon is called turn-hexagon of $P$ if its two $2$-vertices (the vertices of degree $2$) are adjacent in $P$. A $3$-hexagon is called a full-hexagon of $P$. Let $\mathscr{P}_{h,i}\subseteq \mathscr{P}_h$ $(0\leq i\leq \lfloor\frac{h-1}{2}\rfloor)$ be the set of phenylenes with $i$ full-hexagons.
Each graph in $\mathscr{P}_{h,0}$ (or denoted by $\mathcal{C}_{h}$ ) is called a phenylene chain. A phenylene chain is called a linear phenylene chain (denoted by $L_{h}$) if it contains no turn-hexagons. Let $\mathcal{C}_{h,i}\subseteq \mathcal{C}_{h}$ be the set of  phenylene chains with $i$ turn-hexagons.

A segment of a phenylene chain $G\in \mathcal{C}_{h}$ is a maximal linear sub-chain. Denote by $S$ a non-terminal segment of a phenylene chain $G$.
We call $S$ a non-zigzag segment (resp., a zigzag segment) if it's two neighboring segments lie on the same sides (resp., on different sides) of the line through centers of all hexagons and squares on $S$.

Denote by $C_{L}(t_{1},t_{2},t_{3},\cdots, t_{k}, t_{k+1})$ the phenylene chain with $h$ hexagons and exactly $k+1$ segments $S_1, S_2,\cdots, $ $S_{k+1}$ of lengths $t_{1}+1,t_{2}+2,t_{3}+2,\cdots,t_{k}+2, t_{k+1}+1$, respectively, where $S_1$ and $S_{k+1}$ are the terminal segments, all $S_i$ $(2\leq i\leq k)$ are zagzig segments, $1\leq t_1\leq t_{k+1}$, and $\sum\limits_{i=1}^{k+1}t_{i}+k=h$.
Particularly, $C_{L}(j,n)\in \mathcal{C}_{h,1}$ is the graph including two vertex-disjoint linear phenylene chains $L_{j}$ and $L_{n}$ as subgraphs, where $j\leq n$, and $h=j+n+1$. $C_{L}(j,k,n)\in \mathcal{C}_{h,2}$ is the graph including three vertex-disjoint linear phenylene chains $L_{j}$, $L_{k}$ and $L_{n}$ as subgraphs, where $j\leq n$, the second segment is a zigzag segment and $h=j+k+n+2$.

Denote by $P_{L}(j,k,n)\in \mathscr{P}_{h,1}$ the graph including three vertex-disjoint linear phenylene chains $L_{j}$, $L_{k}$ and $L_{n}$ as subgraphs, where $j\leq k\leq n$, and $h=j+k+n+1$.

Do\v{s}li\'{c} et al.\cite{dmstz2018} introduced Mostar index \cite{dmstz2018} of a graph $G$, which is defined as
$$Mo(G)=\sum_{e=uv\in E(G)}|n_{u}-n_{v}|.$$
Do\v{s}li\'{c} et al. determined extremal values of Mostar index among trees and unicyclic graphs, then gave a cut method for computing the Mostar index of benzenoid systems.

Similarly, the edge Mostar index \cite{iari2020} is defined as
$$Mo_{e}(G)=\sum_{e=uv\in E(G)}|m_{u}-m_{v}|,$$
where $m_{u}$ (resp.,$m_{v}$) is the number of edges whose distance to vertex $u$ (resp., $v$) is smaller than the distance to vertex $v$ (resp., $u$). We can refer to \cite{aocm2019,mjnk2020,kbla2020,dngl2020,dgli2021,dmstz2018,gaxu2020,hlzh2020,hyzu2019,hyuz2019,iari2020,lsxt2020,
teph2019,tank2020,xztd2021,xzdt2021} for more details about (edge) Mostar index.

Mostar indices can be used to measure the peripherality of chemical graphs, so in addition to chemical applications, Mostar indices have a wide range of applications in complex networks. It can be used to describe structural properties of the network. Mostar indices can also be used to extend quantum estimates and expand reactivity based on electronic descriptors.
In this study, our aim is to solve the extremal problem of tree-like phenylenes with respect to Mostar indices. Using the methods of \cite{dngl2020}, we determine the first three minimum values of the Mostar index of tree-like phenylenes with a fixed number of hexagons and characterize all the tree-like phenylenes attaining these values.

\section{Preliminaries}
\hskip 0.6cm
An orthogonal cut is a line segment that starts from the middle of a peripheral edge of a phenylene, goes orthogonal to this edge and ends at the first next peripheral edge that it intersects. Let $P\in \mathscr{P}_h$. Denote by $\mathcal{O}_{P}^{uv}$ the set of edges that parallel with $uv$ in $P$, and $|\mathcal{O}_{P}^{uv}|=o_{P}^{uv}$. Let $\mathcal{O}_{P}$ be the set of all disjoint parallel classes in $P$.
Note that $\mathcal{O}_{P}^{uv}$ is an edge cut, denote by $G_{P}^{u}$ $(resp., G_{P}^{v})$ the connected components of $P-\mathcal{O}_{P}^{uv}$ contain $u$ $(resp., v)$. Denote by $r_{P}^{u}$ $(resp., r_{P}^{v})$ the number of hexagons in $G_{P}^{u}$ $(resp., G_{P}^{v})$. Note that, for any
$P\in \mathscr{P}_h$, $|V_{P}|=6h$ and $|E_{P}|=8h-2$.

Bearing in mind that $P_{L}(j,k,n)\in \mathscr{P}_{h,1}$ is the graph including three vertex-disjoint linear phenyene chains $L_{j}$, $L_{k}$ and $L_{n}$ as subgraphs, where $j\leq k\leq n$, and $h=j+k+n+1$.
By the definition of $Mo(G)$, we can calculate the value of $Mo(P_{L}(j,k,n))$.
\begin{lemma}\label{l2-3}
Given a phenylene $G=P_{L}(j,k,n)\in \mathscr{P}_{h,1}$ with three branches $L_{j}, L_{k}, L_{n}$ $(1\leq j\leq k\leq n)$ and $h$ $(h=j+k+n+1)$ hexagons, then

$(1)$ If $n\leq \lfloor\frac{h}{2}\rfloor$, then $Mo(G)=24(2kj+3nj+4kn+k+2n)$.

$(2)$ If $n\geq \lfloor\frac{h}{2}\rfloor+1$, then $Mo(G)=6(4j+3j^{2}+8k+3k^{2}+4n+3n^{2}+14kj+6jn+10kn+1)$ for even $h$; $Mo(G)=6(4j+3j^{2}+8k+3k^{2}+4n+3n^{2}+14kj+6jn+10kn)$ for odd $h$.
\end{lemma}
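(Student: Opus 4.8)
The plan is to apply the cut formula of Lemma~\ref{l2-2}, so that computing $Mo(G)$ reduces to enumerating the orthogonal cuts of $G=P_L(j,k,n)$ and, for each representative edge $uv\in E_P^o$, recording the cut size $o_P^{uv}$ together with the hexagon imbalance $|r_P^u-r_P^v|$. Writing the central full-hexagon as $R_0$ and the three linear branches $L_j,L_k,L_n$ as arms meeting at $R_0$, I would first argue that every orthogonal cut falls into exactly one of three families: (i) three \emph{long} cuts, one running along the axis of each arm straight through $R_0$, of sizes $2(j+1),2(k+1),2(n+1)$; (ii) the \emph{transverse} per-hexagon cuts, two of size $2$ at each of the $h-1$ hexagons lying in the arms; and (iii) the \emph{square} cuts, one of size $2$ through each quadrilateral (including the three joining the arms to $R_0$). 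A bookkeeping check that $\sum_{\text{cuts}}o_P^{uv}=8h-2=|E_P|$ confirms that no cut has been missed.

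Next I would compute each imbalance by locating the hexagons on the two sides. The long cut along a given arm bisects that arm together with $R_0$ and leaves the other two arms on opposite sides, so its imbalance is the difference of the other two arm-lengths ($n-k$, $n-j$, $k-j$). A transverse cut at the $i$-th hexagon of an arm (counted from the tip) splits that hexagon and isolates $i-1$ hexagons from the remaining $h-i$, giving imbalance $|2i-1-h|$; a square cut isolating the first $i$ hexagons of an arm gives imbalance $|2i-h|$, the central-square cut being the $i=\ell$ term for that arm. Assembling these, Lemma~\ref{l2-2} yields
\begin{align*}
\tfrac{1}{6}Mo(G) = {}& 2\bigl[(j{+}1)(n{-}k)+(k{+}1)(n{-}j)+(n{+}1)(k{-}j)\bigr] \\
& {}+ 4\!\!\sum_{\ell\in\{j,k,n\}}\sum_{i=1}^{\ell}|2i{-}1{-}h| + 2\!\!\sum_{\ell\in\{j,k,n\}}\sum_{i=1}^{\ell}|2i{-}h|.
\end{align*}

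The heart of the argument is resolving these absolute values, and this is exactly where the case split enters. Since $j\le k\le n$ and $h=j+k+n+1$, one always has $2j<h$ and $2k<h$, so the $j$- and $k$-sums never change sign; only the $n$-sums are sensitive to the size of $n$. When $n\le\lfloor h/2\rfloor$ every index $i\le n$ satisfies $2i\le h$, so all absolute values open with a single sign; substituting $h=j+k+n+1$ makes the apparent $n^2$ contributions cancel, and after expansion one obtains $Mo(G)=24(2kj+3nj+4kn+k+2n)$. When $n\ge\lfloor h/2\rfloor+1$ the terms with large $i$ in the $n$-sums flip sign at the threshold $i\approx h/2$; the surviving upper blocks evaluate to quadratic expressions in $n-\tfrac{h}{2}$, which no longer cancel and thus produce the genuine quadratic terms $3j^2+3k^2+3n^2$ of the second formula. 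The parity of $h$ merely shifts this threshold by a half-integer, which is what accounts for the extra constant $+1$ in the even case.

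I expect the main obstacle to be the rigorous enumeration and classification of the cuts---in particular, verifying that each arm-cut really does extend straight through $R_0$ with the stated size, and that the two transverse cuts per hexagon behave uniformly all the way up to the innermost hexagon of each arm---followed by the disciplined sign-analysis of the two index-sums near $i=h/2$, keeping the even/odd bookkeeping straight. Everything after the cut decomposition is elementary summation; the edge-count identity $\sum o_P^{uv}=8h-2$ is the key safeguard that the decomposition is complete.
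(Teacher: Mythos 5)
Your proposal is correct and takes essentially the same route as the paper: both apply the cut formula of Lemma~\ref{l2-2} to arrive at the identical master expression --- three long cuts of sizes $2(j{+}1)$, $2(k{+}1)$, $2(n{+}1)$ with imbalances $n{-}k$, $n{-}j$, $k{-}j$, plus per-hexagon cuts contributing $4\sum_{i=1}^{\ell}|h{+}1{-}2i|$ and per-square cuts contributing $2\sum_{i=1}^{\ell}|h{-}2i|$ for $\ell\in\{j,k,n\}$ --- and then resolve the absolute values by the same case split on $n\lessgtr\lfloor h/2\rfloor$ together with the parity of $h$. Your explicit classification of the cuts and the edge-count check $\sum o_P^{uv}=8h-2$ simply spell out what the paper compresses into ``by using the cut method.''
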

\begin{proof}
By using the cut method to $P_{L}(j,k,n)$, we have
\begin{align*}
 Mo(G)=&6\{2(j+1)(n-k)+2(k+1)(n-j)+2(n+1)(k-j)\\
 &+4\sum_{i=1}^{j}(h+1-2i)+4\sum_{i=1}^{k}(h+1-2i)+4\sum_{i=1}^{n}|h+1-2i|\\
 &+2\sum_{i=1}^{j}(h-2i)+2\sum_{i=1}^{k}(h-2i)+2\sum_{i=1}^{n}|h-2i|\}.
\end{align*}

$(1)$ If $n\leq \lfloor\frac{h}{2}\rfloor$, then
\begin{align*}
 Mo(G)=&6\{2(j+1)(n-k)+2(k+1)(n-j)+2(n+1)(k-j)\\
 &+4\sum_{i=1}^{j}(h+1-2i)+4\sum_{i=1}^{k}(h+1-2i)+4\sum_{i=1}^{n}(h+1-2i)\\
 &+2\sum_{i=1}^{j}(h-2i)+2\sum_{i=1}^{k}(h-2i)+2\sum_{i=1}^{n}(h-2i)\}\\
 =&24(2kj+3nj+4kn+k+2n).
\end{align*}

$(2)$ If $n\geq \lfloor\frac{h}{2}\rfloor+1$, and $h$ is even, then
\begin{align*}
 Mo(G)=&6\{2(j+1)(n-k)+2(k+1)(n-j)+2(n+1)(k-j)\\
 &+4\sum_{i=1}^{j}(h+1-2i)+4\sum_{i=1}^{k}(h+1-2i)+4\sum_{i=1}^{\frac{h}{2}}(h+1-2i)-4\sum_{i=\frac{h}{2}+1}^{n}(h+1-2i)\\
 &+2\sum_{i=1}^{j}(h-2i)+2\sum_{i=1}^{k}(h-2i)+2\sum_{i=1}^{\frac{h}{2}}(h-2i)-2\sum_{i=\frac{h}{2}+1}^{n}(h-2i)\}\\
 =&6(4j+3j^{2}+8k+3k^{2}+4n+3n^{2}+14kj+6jn+10kn+1).
\end{align*}

If $n\geq \lfloor\frac{h}{2}\rfloor+1$, and $h$ is odd, similarly, we have $Mo(G)=6(4j+3j^{2}+8k+3k^{2}+4n+3n^{2}+14kj+6jn+10kn)$.

This completes the proof.
\end{proof}

\section{The minimal tree-like phenylenes}
\hskip 0.6cm
Let $G\in \mathcal{C}_{h}$ and $R_{1}$, $R_{h}$ are two terminal hexagons of $G$. Denote by $x_{i,1}, x_{i,2}, \cdots, x_{i,6}$ the six clockwise successive vertices in $R_{i}$ for $i=1,h$, where $d_{G}(x_{i,j})=2$ for $j=1,2,3,4$.
Let $e_{ij}=x_{i,j}x_{i,j+1}$ for $i=1,h$ and $j=1,2,\cdots, 6$ (let $x_{i,7}:=x_{i,1}$).

Suppose that $P_{1},P_{2}\in \mathscr{P}$ and $u_{i},v_{i}$ be two adjacent $2$-vertices (vertices with degree 2) in $P_{i}$ for $i=1,2$. Let $P=P_{1}(u_{1},v_{1}) \Box  P_{2}(u_{2},v_{2})$ be the phenylene obtained from $P_{1}, P_{2}$ by connecting $u_{1}$ with $u_{2}$, and $v_{1}$ with $v_{2}$, respectively.

\begin{figure}[ht!]
  \centering
  \scalebox{.15}[.15]{\includegraphics{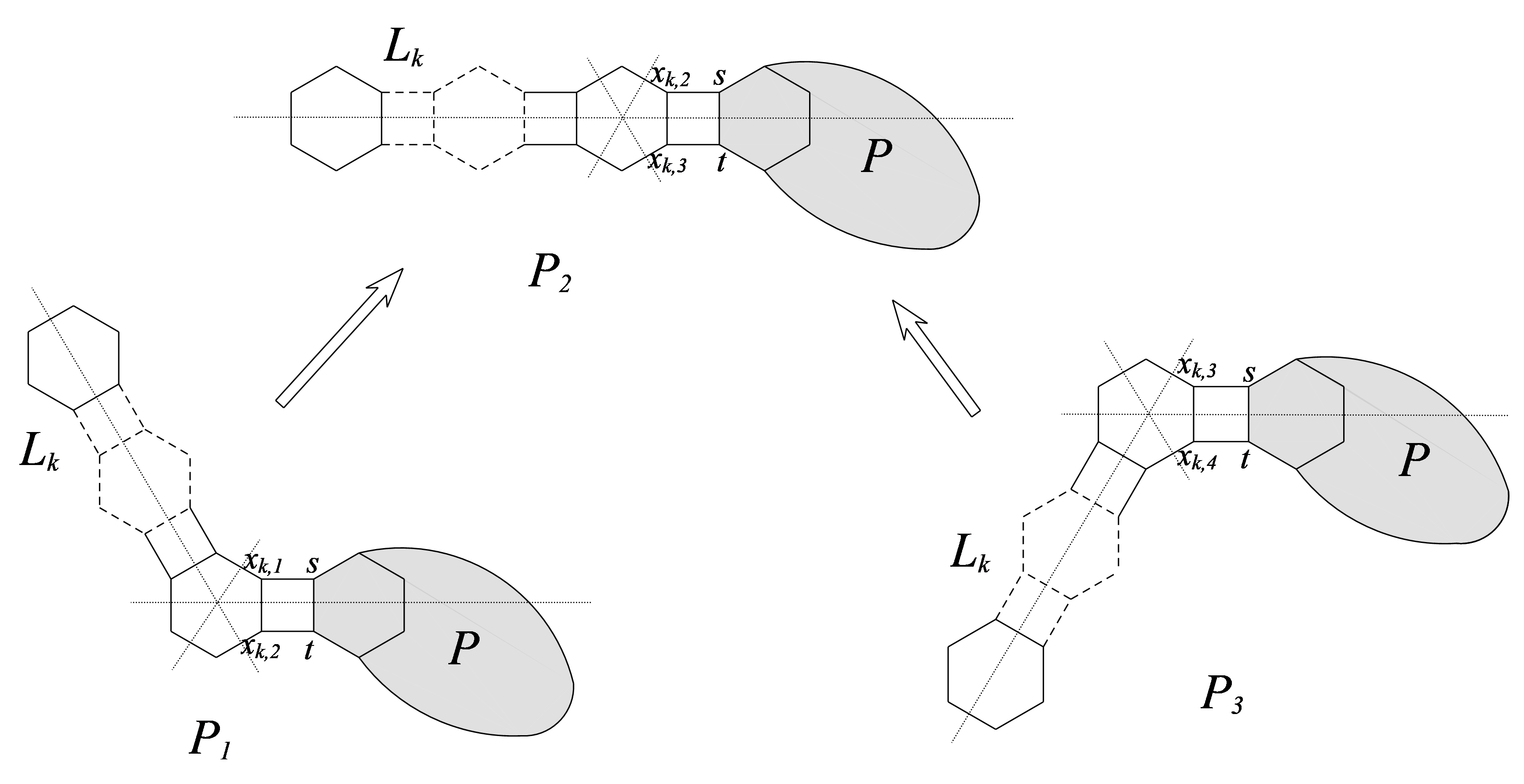}}
  \caption{The tree-like phenylenes $P_{1}$, $P_{2}$ and $P_{3}$ of Lemma \ref{l3-1}.}
 \label{fig-1}
\end{figure}

For $uv\in E(G)$, we denote $n_{G}^{u}$ (or $n_{u}$) the number of vertices in $G$ lying closer to vertex $u$ than to vertex $v$. Some symbols involved in the proof are described in Section 2.
\begin{lemma}\label{l3-1}
Let $P\in \mathscr{P}_{n}$ and $P_{i}=P(s,t) \Box  L_{k}(x_{k,i},x_{k,i+1})$ for $i=1,2,3$, see Figure \ref{fig-1}. And $k\geq 2$, $n\geq k-1$, then

$(1)$ $Mo(P_{2})\leq Mo(P_{1})$, with equality iff $|r_{P}^{t}-r_{P}^{s}|\geq k-1$ and $\min \{r_{P}^{s}, r_{P}^{t}\}=0$.

$(2)$ $Mo(P_{2})< Mo(P_{3})$.
\end{lemma}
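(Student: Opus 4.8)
The plan is to evaluate the three indices by the cut method (Lemmas~\ref{l2-1} and \ref{l2-2}) and to compare them after cancelling the parallel classes that are common to all three graphs. Write $A$ for the sub-chain $L_{k-1}$ formed by the $k-1$ hexagons of $L_{k}$ other than its terminal hexagon $R_{k}$, so that each $P_{i}$ is built by gluing the fixed phenylene $P$ to $R_{k}$ along the fixed edge $st$ and gluing $A$ to $R_{k}$ along $e_{k5}=x_{k,5}x_{k,6}$; the three graphs differ only in the edge $e_{k,i}$ of $R_{k}$ that carries $P$. I would first observe that any parallel class lying wholly inside $A$, lying wholly inside $P$, or cutting off $P$ (respectively $A$) through its gluing square is disjoint from the three classes of $R_{k}$ that move, so both its size and the block of hexagons it sends to one side (a fixed $1+n$ for a class inside $A$, a fixed $k$ for a class inside $P$, and the analogous constants for the two cut-off classes) are independent of $i$. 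All such classes cancel in every difference $Mo(P_{i})-Mo(P_{j})$, and the comparison becomes local at $R_{k}$.

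The only classes that change are the three parallel classes of $R_{k}$, namely $C_{14}=\{e_{k1},e_{k4}\}$, $C_{25}=\{e_{k2},e_{k5}\}$ and $C_{36}=\{e_{k3},e_{k6}\}$. The decisive point is that in $P_{2}$ the edges $e_{k2}$ (carrying $P$) and $e_{k5}$ (carrying $A$) are opposite edges of $R_{k}$, so the single class $C_{25}$ simultaneously threads the whole axis of $A$ and penetrates $P$ through $st$, while $C_{14}$ and $C_{36}$ reduce to local diagonal cuts of $R_{k}$ of size $2$. In $P_{1}$ and $P_{3}$ the gluing edge is adjacent to $e_{k2}$, so the penetration into $P$ is instead carried by $C_{14}$ and $C_{36}$ respectively, while $C_{25}$ now runs only through $A$--$R_{k}$ and stops at the free edge $e_{k2}$. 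Reading off the sizes and the hexagon differences, and using the identity $o^{st}_{P}=2\bigl(n-r^{s}_{P}-r^{t}_{P}\bigr)$ (each hexagon of $P$ bisected by $\mathcal{O}^{st}_{P}$ contributes two edges to that class), the changing contribution in $P_{2}$ is $6\bigl(2k+o^{st}_{P}\bigr)\,|r^{s}_{P}-r^{t}_{P}|$ from $C_{25}$ plus two size-$2$ cuts, whereas in $P_{1}$ and $P_{3}$ it is $6\cdot 2k\cdot n$ from $C_{25}$ together with $6\bigl(2+o^{st}_{P}\bigr)\,\bigl|(k-1)\pm(r^{s}_{P}-r^{t}_{P})\bigr|$ from the penetrating class.

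Collecting terms, each difference becomes piecewise-linear in $D:=r^{s}_{P}-r^{t}_{P}$, with breakpoints at $D=0$ and $D=\pm(k-1)$. Normalising the labels so that $r^{s}_{P}\le r^{t}_{P}$, i.e. $D\le 0$ (the orientation implicit in the statement), I expect the reduction to give $Mo(P_{1})-Mo(P_{2})=24(k-1)\,r^{s}_{P}$ when $r^{t}_{P}-r^{s}_{P}\ge k-1$ and a strictly positive value otherwise; this yields $Mo(P_{2})\le Mo(P_{1})$ with equality exactly when $\min\{r^{s}_{P},r^{t}_{P}\}=0$ and $|r^{s}_{P}-r^{t}_{P}|\ge k-1$. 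The same bookkeeping gives $Mo(P_{3})-Mo(P_{2})=24(k-1)\bigl(n+1-r^{t}_{P}\bigr)$, which is strictly positive since $r^{t}_{P}\le r^{s}_{P}+r^{t}_{P}\le n-1$; note that the strictness here mirrors the equality case of part~(1), because an equality in part~(2) would force $r^{t}_{P}=0$, hence $r^{s}_{P}=r^{t}_{P}=0$ and $|D|=0<k-1$. The step I expect to be the main obstacle is the one just used: proving rigorously, from the $\Theta$-class structure of a phenylene, exactly how far each of $C_{14},C_{25},C_{36}$ extends once the gluing edge is rotated around $R_{k}$ and which hexagons then fall on each side. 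This has to be carried out uniformly for an arbitrary $P$, whose individual internal cuts never appear and enter the final inequalities only through the two invariants $r^{s}_{P}$ and $r^{t}_{P}$.
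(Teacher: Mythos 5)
Your proposal is correct and follows essentially the same route as the paper's proof: apply the cut method (Lemma~\ref{l2-2}), cancel every parallel class common to $P_1,P_2,P_3$, and compare the local contributions of the three classes through $R_k$ together with the class penetrating $P$ through $st$, arriving at exactly the paper's values $Mo(P_1)-Mo(P_2)=24(k-1)r_P^{s}$ when $r_P^{t}-r_P^{s}\ge k-1$ (strictly positive otherwise) and $Mo(P_3)-Mo(P_2)=24(k-1)(n+1-r_P^{t})=12(k-1)(o_P^{st}+2r_P^{s}+2)>0$. The only flaw is your parenthetical justification of strictness in part (2): equality there would force $r_P^{t}=n+1$, not $r_P^{t}=0$, but this is immaterial since your bound $r_P^{t}\le n-1$ already yields strict positivity.
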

\begin{proof}
Let $\mathcal{O}_{P}^{st}$ denote the set of edges that are parallel to edge $st$ in $P$.
Assume that $r_{P}^{s}\leq r_{P}^{t}$. Denote $E_{d}=\{e_{k1}\}\cup \{e_{k3}\}\cup \{e_{k4}\}\cup \{e_{k6}\} \cup \mathcal{O}_{P}^{st} \cup \mathcal{O}_{L_{k}}^{e_{k2}}$.
Denote $\phi_{i}(E_{d})=\sum\limits_{uv\in E_{d}}|n_{P_{i}}^{u}-n_{P_{i}}^{v}|$ ($i=1,2,3$), we have $Mo(P_{i})-Mo(P_{2})=\phi_{i}(E_{d})-\phi_{2}(E_{d})$ ($i=1,3$).

Note that $n\geq k-1$, then

$ \phi_{1}(E_{d})=6\{2(n-k+1)+2kn+(o_{P}^{st}+2)|(r_{P}^{t}-r_{P}^{s})-(k-1)|\}$;

$ \phi_{2}(E_{d})=6\{4(n-k+1)+(o_{P}^{st}+2k)(r_{P}^{t}-r_{P}^{s})\}$;

$ \phi_{3}(E_{d})=6\{2(n-k+1)+2kn+(o_{P}^{st}+2)[(r_{P}^{t}-r_{P}^{s})+(k-1)]\}$.

Bearing in mind that $k\geq 2$ and $n=\frac{1}{2}o_{P}^{st}+r_{P}^{t}+r_{P}^{s}$.

\noindent $(1)$ If $r_{P}^{t}-r_{P}^{s}\geq k-1$, then
$\phi_{1}(E_{d})-\phi_{2}(E_{d})=6(k-1)(2n-o_{P}^{st}-2(r_{P}^{t}-r_{P}^{s}))=24(k-1)r_{P}^{s}\geq 0$, with equality iff $r_{P}^{s}=0$.

If $r_{P}^{t}-r_{P}^{s}< k-1$, then
$\phi_{1}(E_{d})-\phi_{2}(E_{d})=12(o_{P}^{st}+2)(k-1-(r_{P}^{t}-r_{P}^{s}))>0$.

\noindent $(2)$ $\phi_{3}(E_{d})-\phi_{2}(E_{d})=6(k-1)(2n+4+o_{P}^{st}-2(r_{P}^{t}-r_{P}^{s}))=12(k-1)(o_{P}^{st}+2r_{P}^{s}+2)> 0$.

This completes the proof.
\end{proof}
\ \notag\

By Lemma \ref{l3-1}, we have
\begin{corollary}\label{c3-2}
Let $P\in \mathscr{P}_{n}$  with $s,t$ being two adjacent $2$-vertices of its turn hexagon. Denote by $C_{k}$ a phenylene chain with $k$ $( \geq 1 ) $ hexagons, and $u,v$ are two adjacent $2$-vertices of its one terminal hexagon. Then
$Mo(P(s,t) \Box L_{k}(x_{k,2},x_{k,3})) \leq Mo(P(s,t) \Box  C_{k}(u,v))$.
\end{corollary}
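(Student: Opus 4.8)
The plan is to prove the corollary by repeatedly \emph{straightening} the chain $C_k$ into the linear chain $L_k$ and invoking Lemma \ref{l3-1} at each step, since that lemma already asserts that the straight (middle-edge, $i=2$) attachment of a linear chain beats the two bent ($i=1,3$) attachments. Throughout, the way the appended $k$-hexagon chain meets $P$ at the turn-hexagon vertices $s,t$ stays fixed; I only modify the appended chain itself and, at the very last step, the pair of adjacent $2$-vertices of its terminal hexagon through which it meets $P$.

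First I would induct on the number of segments (equivalently, the number of turn-hexagons) of the phenylene chain $C_k$. In the base of the induction $C_k$ has a single segment, i.e.\ $C_k=L_k$, and it is attached to $P$ at some pair $(x_{k,i},x_{k,i+1})$ of adjacent $2$-vertices of its terminal hexagon; here the claim is exactly Lemma \ref{l3-1}(1)--(2), namely $Mo(P(s,t)\Box L_k(x_{k,2},x_{k,3}))\le Mo(P(s,t)\Box L_k(x_{k,i},x_{k,i+1}))$ for $i\in\{1,3\}$. (For $k=1$, $C_1=L_1$ is a single hexagon and all choices of adjacent $2$-vertices are equivalent by the symmetry of the hexagon, so equality holds.)

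For the inductive step, consider the far terminal segment $S$ of $C_k$, i.e.\ the maximal linear sub-chain containing the terminal hexagon that is \emph{not} the one meeting $P$. Since $k\ge 2$ and $S$ extends inward up to and including the first turn-hexagon, $S$ is a linear chain $L_m$ with $m\ge 2$, sharing that turn-hexagon with the remaining chain $C'=C_{k-m+1}$ (so that $(k-m+1)+m-1=k$). I would then regard the sub-phenylene $\tilde P=P(s,t)\Box C'(u,v)$ as the \emph{base} phenylene and $S=L_m$ as the appended linear chain, applying Lemma \ref{l3-1} with the lemma's role of $(s,t)$ now played by the adjacent $2$-vertices of $\tilde P$ at the junction hexagon. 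Because $S$ is a maximal segment, the junction is necessarily bent, so it realizes one of the positions $i\in\{1,3\}$; replacing it by the straight position $i=2$ merges $S$ with its neighboring segment, strictly lowering the segment count while not increasing $Mo$. Iterating drives $C_k$ to a linear chain $L_k$ attached at $(u,v)$ without increasing $Mo$, and a final application of Lemma \ref{l3-1} (now with base $P$ and appended chain $L_k$) slides the $P$-attachment to the middle edge $x_{k,2}x_{k,3}$, again without increasing $Mo$. Chaining these inequalities gives $Mo(P(s,t)\Box L_k(x_{k,2},x_{k,3}))\le Mo(P(s,t)\Box C_k(u,v))$.

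The main obstacle I anticipate is the bookkeeping required to certify that each straightening step is genuinely an instance of Lemma \ref{l3-1}: one must check that at every stage the current base $\tilde P$ is a legitimate tree-type phenylene whose attachment vertices form a pair of adjacent $2$-vertices, that the lemma's size hypotheses ($k\ge 2$ and $n\ge k-1$, read off here for $L_m$ and $\tilde P$) hold, and---most delicately---that the three geometric positions $x_{m,1}x_{m,2}$, $x_{m,2}x_{m,3}$, $x_{m,3}x_{m,4}$ of the lemma correspond exactly to the actual bent/straight configurations at the junction, so that straightening always moves toward $i=2$ and can therefore only decrease $Mo$. Once this correspondence is pinned down, termination (each step lowers the segment count by one) and the collapse of the telescoping inequalities are routine.
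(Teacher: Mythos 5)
Your overall plan --- iteratively applying Lemma \ref{l3-1} to straighten the far turn of $C_k$, reducing the segment count by one at each step, and finishing by sliding the attachment at $P$ to the middle edge $x_{k,2}x_{k,3}$ --- is exactly the argument the paper has in mind: its entire proof of the corollary consists of the words ``By Lemma \ref{l3-1}'', and your telescoping chain of inequalities is the natural fleshing-out of that citation. Your base case, including the $k=1$ symmetry remark (needed since the lemma assumes $k\geq 2$), is also fine.

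However, the inductive step as you have written it is not a valid instance of Lemma \ref{l3-1}. You place the far turn-hexagon into both the appended segment $S=L_m$ and the remaining chain $C'=C_{k-m+1}$, and then take $\tilde P = P(s,t)\,\Box\, C'(u,v)$ as the base with $S$ appended. That double-counts the turn-hexagon: the graphs the lemma would then compare have $6(n+k+1)$ vertices rather than $6(n+k)$, so they are not the graphs in your telescoping chain. Moreover, the natural repair of keeping the turn-hexagon in the base (so that only $L_{m-1}$ is appended) makes the step vacuous: the bend at a turn-hexagon is encoded in \emph{which pair of adjacent $2$-vertices of the base} the connecting square occupies, and Lemma \ref{l3-1} holds the base attachment $(s,t)$ fixed, varying only the attachment position on the appended chain; with the turn-hexagon inside the base, the appended chain already sits at the straight position $i=2$ and the lemma yields nothing. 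The correct cut is at the square on the $P$-side of the far turn-hexagon: writing the hexagons of $C_k$ as $H_1,H_2,\dots,H_k$ with $H_{j+1}$ the turn-hexagon closest to the far end, take the base to be $\tilde P = P(s,t)\,\Box\,(H_1\cdots H_j)$ (excluding the turn-hexagon) and the appended chain to be the whole far terminal segment $H_{j+1}\cdots H_k = L_m$ with $m=k-j\geq 2$, whose attachment-end terminal hexagon is the turn-hexagon. Only with this decomposition is the current configuration at a bent position $i\in\{1,3\}$, so that Lemma \ref{l3-1} straightens it without increasing $Mo$. With this fix the size hypotheses you flagged do check out: the base has $n+k-m$ hexagons and $n+k-m\geq m-1$ follows from $m\leq k$ together with $n\geq k-1$ (the hypothesis of Lemma \ref{l3-1}, which the corollary tacitly inherits), and the remainder of your argument --- termination and the final application with base $P$ and appended $L_k$ --- is sound.
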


Let $P\in \mathscr{P}_{n+1}$, where $n\geq max\{j,k\}$, $j,k\geq 1$. $t_{1},t,s,s_{1}$ are vertices in Figure \ref{fig-2}, and $r_{P}^{s}\leq r_{P}^{t}$, $h=j+k+n+1$. Then we have

\begin{figure}[ht!]
  \centering
  \scalebox{.08}[.08]{\includegraphics{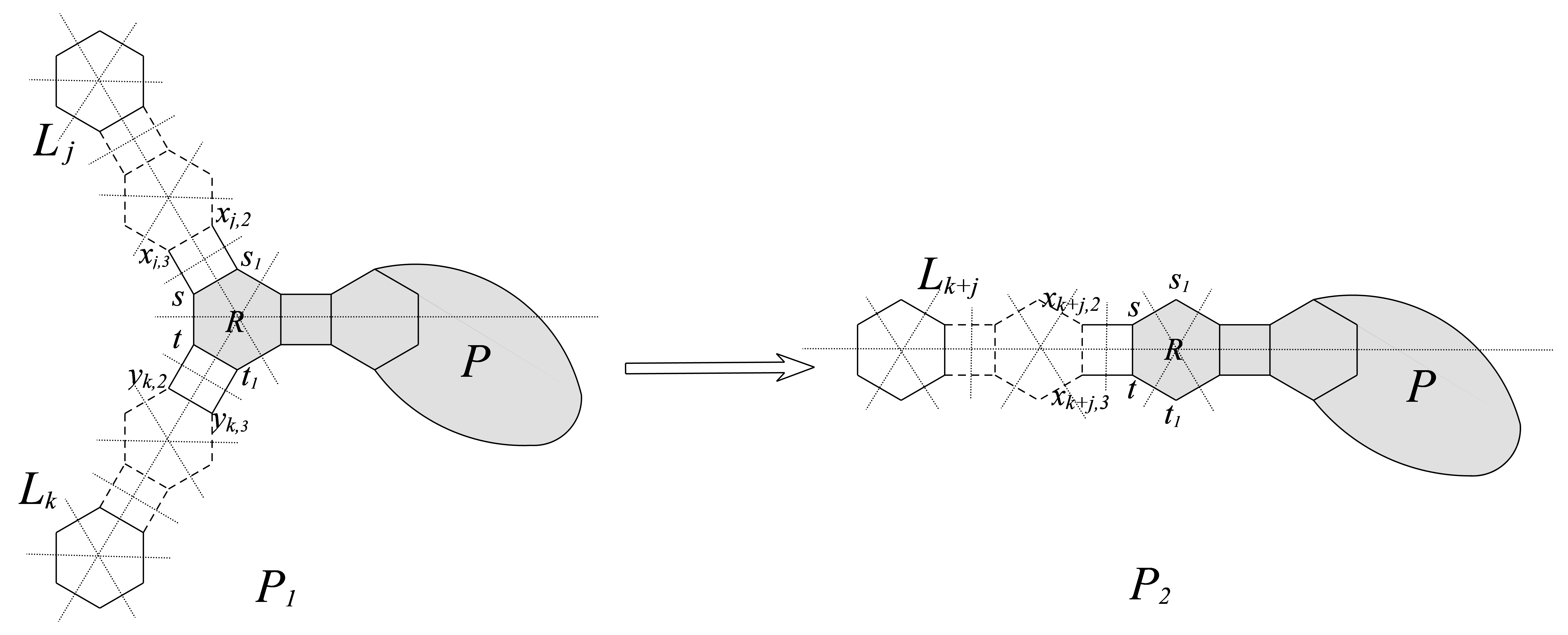}}
  \caption{The tree-type phenylenes $P_{1}$, $P_{2}$ of Lemma \ref{l3-3}.}
 \label{fig-2}
\end{figure}

\begin{lemma}\label{l3-3}
Let $P_{1}=\{P(t,t_{1})\Box L_{k}(y_{k,2},y_{k,3})\}(s_{1},s)\Box L_{j}(x_{j,2},x_{j,3})$ and $P_{2}=P(s,t)\Box L_{k+j}$ $(x_{k+j,2},x_{k+j,3})$, see Figure \ref{fig-2}, then $Mo(P_{2})<Mo(P_{1})$.
\end{lemma}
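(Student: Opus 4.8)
The plan is to follow the template of the proof of Lemma \ref{l3-1}, applying the cut method of Lemma \ref{l2-2} and isolating only those edges whose contributions actually differ between $P_1$ and $P_2$. Both phenylenes arise from the same base $P\in\mathscr{P}_{n+1}$ by hanging linear chains with a total of $k+j$ hexagons on the terminal hexagon $R$: in $P_2$ a single chain $L_{k+j}$ is attached along $ts$, whereas in $P_1$ the chain $L_k$ is attached along $tt_1$ and $L_j$ along $ss_1$, these being three distinct edge-directions of $R$. The key preliminary observation is that any orthogonal cut of $P$ that is not one of the three classes meeting the edges $tt_1$, $ts$, $ss_1$ of $R$ never reaches the attached chains in either graph; its size $o^{uv}$ is unchanged and, since all $k+j$ new hexagons lie on the $R$-side for both graphs, its signed value $r^{u}-r^{v}$ is also identical in $P_1$ and $P_2$. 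Hence every such class cancels in $Mo(P_1)-Mo(P_2)$.

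First I would set $E_d$ to consist of the edge sets of the attached chains together with the (at most three) orthogonal cuts of $P$ containing $tt_1$, $ts$ and $ss_1$, each extended through $R$ into $P$ and, where present, into the chains, exactly mirroring the choice $E_d=(\cup_i\{e_{k,i}\})\cup\mathcal{O}_P^{st}\cup\mathcal{O}_{L_k}^{e_{k,2}}$ of Lemma \ref{l3-1}. Writing $\phi_i(E_d)=\sum_{uv\in E_d}|n_{P_i}^{u}-n_{P_i}^{v}|$ as there, we get $Mo(P_1)-Mo(P_2)=\phi_1(E_d)-\phi_2(E_d)$, so it suffices to evaluate these two sums. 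For $P_2$ the cuts internal to $L_{k+j}$ produce sums of the same shape as $\sum_i(h+1-2i)$ and $\sum_i(h-2i)$ appearing in Lemma \ref{l2-3}, while for $P_1$ each of the two branches $L_k$ and $L_j$ contributes its own such sum, plus the two cut classes running from the branches across $R$ into the base. I would compute each of these using the abbreviations $o=o_P^{ts}$ and $r_P^{s}\le r_P^{t}$, together with the analogue of the identity $n=\tfrac12 o_P^{st}+r_P^{t}+r_P^{s}$ used in Lemma \ref{l3-1} (now relating $o$ to the $n+1$ hexagons of the base), which lets me eliminate $o$ in favour of the hexagon counts.

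The main obstacle will be the correct treatment of the absolute values $|r^{u}-r^{v}|$ for the cuts that emanate from the chains and pass through $R$ into $P$. Since the combined chain length $k+j$ may exceed the number $n+1$ of hexagons of the base, such a cut can have more hexagons on the chain side than on the base side, so its sign is not uniformly determined and the governing sums split at the midpoint $i\approx h/2$ just as in case $(2)$ of Lemma \ref{l2-3}; this is also where an even/odd $h$ distinction may enter. The hypothesis $n\ge\max\{j,k\}$ is precisely what keeps each individual branch of $P_1$ from outweighing the base, so that every branch-sum in $\phi_1(E_d)$ is sign-stable, and the only possible flip is for the single long chain in $P_2$, which I would resolve by splitting into the cases $k+j\le n+1$ and $k+j>n+1$. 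After this bookkeeping, subtracting $\phi_2(E_d)$ from $\phi_1(E_d)$ should collapse to a manifestly positive expression — by analogy with the outcome $6(k-1)(\cdots)>0$ of Lemma \ref{l3-1}, I expect it to factor as a positive multiple measuring the interaction lost when the single chain is split into two branches — which yields the strict inequality $Mo(P_2)<Mo(P_1)$.
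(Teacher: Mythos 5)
Your overall strategy is the same as the paper's: isolate the edge classes whose contributions differ, reduce to $Mo(P_1)-Mo(P_2)=\phi_1-\phi_2$, evaluate both by the cut method, and finish with a case analysis. But there is a genuine gap in your sign bookkeeping. Your claim that ``every branch-sum in $\phi_1(E_d)$ is sign-stable, and the only possible flip is for the single long chain in $P_2$'' is false. In $P_1$ the cut $\mathcal{O}_P^{st}$ does not meet the attached chains, but its two sides gain the $k$ hexagons of $L_k$ (on the $t$-side) and the $j$ hexagons of $L_j$ (on the $s$-side), so its contribution is $o_P^{st}\,|r_P^{t}+k-(r_P^{s}+j)|$. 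The hypotheses $n\geq\max\{j,k\}$ and $r_P^{s}\leq r_P^{t}$ do \emph{not} fix this sign, because no order between $j$ and $k$ is assumed: if $r_P^{s}=r_P^{t}$ and $j>k$ the quantity inside is negative. This is exactly why the paper's proof splits into Case 1 ($r_P^{t}+k\geq r_P^{s}+j$) and Case 2 ($r_P^{t}+k<r_P^{s}+j$, which forces $j\geq k+1$), with a different algebraic regrouping in Case 2. Conversely, the cut in $P_2$ that you single out as the dangerous one (the cut running from the chain through $R$ into $P$) is in fact sign-stable: its whole-hexagon counts are $r_P^{t}$ versus $r_P^{s}$, and its contribution is $(o_P^{st}+2(j+k))(r_P^{t}-r_P^{s})\geq 0$. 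The genuine absolute values on the $P_2$ side sit in the transversal sums $\sum_i|h+1-2i|$ and $\sum_i|h-2i|$ over the long chain, which are confined to the chain and never enter $P$; your split into $k+j\leq n+1$ and $k+j>n+1$ together with the parity of $h$ does handle those. Your oversight on the $P_1$ side is repairable without the extra case, but only if you notice it: replacing $|x|$ by $x$ can only decrease $\phi_1$, and one can check that the paper's Case 1 algebra never uses the Case 1 sign assumption, so the resulting (possibly smaller) expression is still positive; as written, however, your proposal asserts an identity for $\phi_1$ that is simply wrong whenever Case 2 occurs.

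There is a second, smaller gap in your choice of $E_d$. If you ``exactly mirror'' the set used in Lemma \ref{l3-1}, then $E_d$ consists of the chains' edges and the three extended cuts, and it omits the four connecting (``rung'') edges of the two linking quadrilaterals in $P_1$ (and the two in $P_2$), which lie in none of those sets. In Lemma \ref{l3-1} these edges could be left out because their contributions agree across the compared graphs (that is the role of the observation $\phi(x_{k,1}s)=\phi(x_{k,2}s)=\phi(x_{k,3}s)$ there); here they do not agree: in $P_1$ the two rung-cuts contribute $12(n+1+j-k)+12(n+1+k-j)=24(n+1)$, while in $P_2$ the single rung-cut contributes $12|n+1-j-k|$. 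So with your $E_d$ the identity $Mo(P_1)-Mo(P_2)=\phi_1(E_d)-\phi_2(E_d)$ fails. The paper avoids this by taking $E_{d,i}=(E_{P_i}-E_P)\cup E_R\cup\mathcal{O}_P^{st}$, which automatically contains every new edge; the rung contributions then appear as the $i=k$ and $i=j$ (resp.\ $i=j+k$) terms of the quadrilateral sums $2\sum_i(h-2i)$ (resp.\ $2\sum_i|h-2i|$).
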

\begin{proof}
Denote $E_{d,i}=(E_{P_{i}}-E_{P})\cup E_{R}\cup \mathcal{O}_{P-R}^{st}$ ($i=1,2$).
Let $\phi_{i}(E_{d,i})=\sum\limits_{uv\in E_{d,i}}|n_{P_{i}}^{u}-n_{P_{i}}^{v}|$ ($i=1,2$), then $Mo(P_{1})-Mo(P_{2})=\phi_{1}(E_{d,1})-\phi_{2}(E_{d,2})$.

Since $n\geq \max\{j,k\}$, then
\begin{align*}
\phi_{1}(E_{d,1})=&6\{2(j+1)(n-k)+2(k+1)(n-j)+o_{P}^{st}|r_{P}^{t}+k-(r_{P}^{s}+j)|\\
 &+4\sum_{i=1}^{j}(h+1-2i)+4\sum_{i=1}^{k}(h+1-2i)+2\sum_{i=1}^{j}(h-2i)+2\sum_{i=1}^{k}(h-2i)\}.
\end{align*}
\begin{align*}
\phi_{2}(E_{d,2})=&6\{(o_{P}^{st}+2(j+k))(r_{P}^{t}-r_{P}^{s})+4\sum_{i=1}^{j+k+1}|h+1-2i|+2\sum_{i=1}^{j+k}|h-2i|\}.
\end{align*}

Note that $n=\frac{1}{2}o_{P}^{st}+r_{P}^{t}+r_{P}^{s}-1$, $j,k\geq 1$ and $o_{P}^{st}\geq 2$.

\noindent {\bf Case 1}. $r_{P}^{t}+k\geq r_{P}^{s}+j$.

\noindent {\bf Subcase 1.1}. $j+k+1\leq \lfloor\frac{h}{2}\rfloor$.

$\phi_{1}(E_{d,1})-\phi_{2}(E_{d,2})=12(o_{P}^{st}k+2(k+j)r_{P}^{s}+4kj)>0$.

\noindent {\bf Subcase 1.2}. $j+k+1\geq \lfloor\frac{h}{2}\rfloor+1$.

If $h$ is even, then $j+k\geq n+1$, and
\begin{align*}
\phi_{1}(E_{d,1})-\phi_{2}(E_{d,2})
=&6\{2o_{P}^{st}k+4(k+j)r_{P}^{s}-3j^{2}-3k^{2}-3n^{2}+2kj+6jn+6kn-4k\\
 & -4j+4n-1\}\\
=&6\{3k(n-k)+3(j+1)(n-j)+2k(o_{P}^{st}+r_{P}^{s}-2)+2j(k+2r_{P}^{s}-1)\\
 &+3n(j+k-n-1)+4(n-1)+2kr_{P}^{s}+j+3\}>0.
\end{align*}

If $h$ is odd, then $j+k\geq n$, and
\begin{align*}
\phi_{1}(E_{d,1})-\phi_{2}(E_{d,2})
=&6\{2o_{P}^{st}k+4(k+j)r_{P}^{s}-3j^{2}-3k^{2}-3n^{2}+2kj+6jn+6kn-4k\\
& -4j+4n\}\\
=&6\{3k(n-k)+3(j+1)(n-j)+2k(o_{P}^{st}+r_{P}^{s}-2)+2j(k+2r_{P}^{s}-1)\\
 &+3n(j+k-n)+n+2kr_{P}^{s}+j\}>0.
\end{align*}

\noindent {\bf Case 2}. $r_{P}^{t}+k< r_{P}^{s}+j$.

Note that $n\geq j\geq k+1\geq 2$ and $o_{P}^{st}\geq 2$.

\noindent {\bf Subcase 2.1}. $j+k+1\leq \lfloor\frac{h}{2}\rfloor$.

$\phi_{1}(E_{d,1})-\phi_{2}(E_{d,2})=12\{o_{P}^{st}(r_{P}^{s}+j-r_{P}^{t})+2(k+j)r_{P}^{s}+4kj\}>0$.

\noindent {\bf Subcase 2.2}. $j+k+1\geq \lfloor\frac{h}{2}\rfloor+1$.

If $h$ is even, then $j+k\geq n+1$, and
\begin{align*}
\phi_{1}(E_{d,1})-\phi_{2}(E_{d,2})
=&6\{2o_{P}^{st}(r_{P}^{s}+j-r_{P}^{t})+4(k+j)r_{P}^{s}-3j^{2}-3k^{2}-3n^{2}+2kj+6jn\\
& +6kn-4k-4j+4n-1\}\\
=&6\{3k(n-k)+3(j+1)(n-j)+3n(j+k-n-1)+2o_{P}^{st}(r_{P}^{s}+j\\
 &-r_{P}^{t}-k-1)+2k(j-2)+2(o_{P}^{st}-2)+(n-j)+4(k+j)r_{P}^{s}\\
 &+2o_{P}^{st}k+3n+3\}>0.
\end{align*}

If $h$ is odd, then $j+k\geq n$, and
\begin{align*}
\phi_{1}(E_{d,1})-\phi_{2}(E_{d,2})
=&6\{2o_{P}^{st}(r_{P}^{s}+j-r_{P}^{t})+4(k+j)r_{P}^{s}-3j^{2}-3k^{2}-3n^{2}+2kj+6jn\\
& +6kn-4k-4j+4n\}\\
=&6\{3k(n-k)+3(j+1)(n-j)+3n(j+k-n)+2o_{P}^{st}(r_{P}^{s}+j\\
 &-r_{P}^{t}-k-1)+2k(j-2)+2(o_{P}^{st}-2)+(n-j)+4(k+j)r_{P}^{s}\\
 &+2o_{P}^{st}k+4\}>0.
\end{align*}

Thus, $Mo(P_{2})<Mo(P_{1})$. This completes the proof.
\end{proof}
\ \notag\

By Lemma \ref{l3-1}, we can directly obtain the smallest Mostar index among $\mathcal{C}_{h}$.
\begin{lemma}\label{l3-4}\cite{clxz2021}
Let $G\in \mathcal{C}_{h}$, then
$Mo(G)\geq Mo(L_{h})=72\lfloor\frac{h}{2}\rfloor\lceil\frac{h}{2}\rceil-24\lfloor\frac{h}{2}\rfloor$, with equality iff $G\cong L_{h}$.
\end{lemma}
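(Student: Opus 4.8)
The plan is to prove Lemma~\ref{l3-4} by an extremal argument that repeatedly transforms an arbitrary chain $G\in\mathcal{C}_h$ into the linear chain $L_h$ while never increasing the Mostar index, with strict decrease whenever $G\not\cong L_h$. Recall that $L_h$ is precisely the chain containing no turn-hexagons, so $G\cong L_h$ iff $G$ has no turn-hexagons. The natural strategy is therefore to show that the presence of any turn-hexagon forces $Mo(G)>Mo(L_h)$, by exhibiting a local modification that strictly lowers the index.

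First I would set up the cut decomposition. By Lemma~\ref{l2-2}, $Mo(G)=6\sum_{uv\in E_G^{o}}o_P^{uv}\,|r_P^{u}-r_P^{v}|$, so the whole problem reduces to understanding, for each orthogonal cut class, how the hexagon-counts $r_P^{u},r_P^{v}$ split across the cut and how large the class $o_P^{uv}$ is. For $L_h$ itself these quantities are completely explicit: the parallel classes come in a predictable pattern, and one obtains the closed form $Mo(L_h)=72\lfloor h/2\rfloor\lceil h/2\rceil-24\lfloor h/2\rfloor$ by a direct summation over the cuts (this is the base computation and should be recorded as a short preliminary calculation). The real content is the comparison with an arbitrary $G$.

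The key step is the straightening operation. Given $G$ with at least one turn-hexagon, choose a turn-hexagon as close to a terminal as convenient and let $s,t$ be its two adjacent $2$-vertices. I would write $G$ in the form $P(s,t)\,\Box\,C_k(u,v)$, where $C_k$ is the phenylene chain hanging off the turn at $s,t$ and $P$ is the remainder; then I apply Corollary~\ref{c3-2}, which gives $Mo\bigl(P(s,t)\,\Box\,L_k(x_{k,2},x_{k,3})\bigr)\le Mo\bigl(P(s,t)\,\Box\,C_k(u,v)\bigr)$. This replaces a branch $C_k$ by a straight branch $L_k$ without increasing the index. Iterating over all turn-hexagons, and invoking Lemma~\ref{l3-3} where two straightened branches meet at a remaining turn so that they can be merged into a single longer linear branch $L_{k+j}$ with a \emph{strict} decrease, one drives $G$ monotonically toward a chain with no turns, i.e.\ toward $L_h$. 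Since Lemma~\ref{l3-3} is strict, any genuine turn contributes a strict drop, which yields the equality characterization $G\cong L_h$.

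The main obstacle I anticipate is bookkeeping rather than conceptual: I must verify that the decomposition $G=P(s,t)\,\Box\,C_k(u,v)$ genuinely satisfies the hypotheses of Corollary~\ref{c3-2} and Lemma~\ref{l3-3} at every stage (in particular that the vertices $s,t$ are adjacent $2$-vertices of a turn-hexagon and that the branch lengths obey the size constraints such as $n\ge k-1$ and $n\ge\max\{j,k\}$), and that the iteration terminates after finitely many strictly-decreasing or non-increasing moves without creating new turns elsewhere. A clean way to guarantee termination and strictness simultaneously is to induct on the number of turn-hexagons (or on the number of segments): each application of the lemmas strictly reduces that count while not increasing $Mo$, so after finitely many steps we reach $L_h$, and strictness of at least one step whenever $G\not\cong L_h$ gives $Mo(G)>Mo(L_h)$ with equality exactly when $G\cong L_h$.
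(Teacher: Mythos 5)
Your plan for the inequality part coincides with the paper's: straighten the chain step by step using Lemma \ref{l3-1} (equivalently Corollary \ref{c3-2}) and compute $Mo(L_h)$ by the cut method. That part is sound. The genuine gap is in your equality characterization, and it has two components. First, Lemma \ref{l3-3} is inapplicable here: its configuration $P_1=\{P(t,t_{1})\Box L_{k}\}(s,s_{1})\Box L_{j}$ contains a full-hexagon (a hexagon meeting three quadrilaterals), and no such hexagon can exist in a chain $G\in\mathcal{C}_h$. In a chain, a turn-hexagon meets only two quadrilaterals, so the situation ``two straightened branches meet at a turn and get merged into $L_{k+j}$'' never occurs; every turn is removed by Lemma \ref{l3-1} alone, and Lemma \ref{l3-3} is reserved for the genuinely branched case (Theorem \ref{t3-5}). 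Consequently you cannot outsource strictness to the strict inequality of Lemma \ref{l3-3}.

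Second, your claim that ``any genuine turn contributes a strict drop'' is false, and the paper itself supplies counterexamples. Lemma \ref{l3-1}(1) is only a non-strict inequality, with equality exactly when $|r_{P}^{t}-r_{P}^{s}|\geq k-1$ and $\min\{r_{P}^{s},r_{P}^{t}\}=0$, and these cases really occur: by Lemma \ref{l4-2}, $Mo(C_{L}(1,h-4,1))=Mo(C_{L}(1,h-2))$, so straightening one of the two turns of $C_{L}(1,h-4,1)$ leaves the Mostar index unchanged (similarly $Mo(C_{L}(1,0,h-3))=Mo(C_{L}(2,h-3))$ in Lemma \ref{l5-1}). Hence your induction ``each step strictly reduces the number of turns and at least one step is strict'' is asserted but not justified by the lemmas you cite. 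The correct repair is to organize the straightening so that the final move goes from a one-turn chain $C_{L}(j,n')$ to $L_h$, writing $C_{L}(j,n')=P(s,t)\Box L_{j+1}(x_{j+1,1},x_{j+1,2})$ with $P=L_{n'}$ and $(s,t)$ the end-cap pair of its terminal hexagon; there the relevant cut is the axial cut of $L_{n'}$, so $r_{P}^{s}=r_{P}^{t}=0<k-1=j$, the equality condition of Lemma \ref{l3-1}(1) fails, and the proof of that lemma gives the strict drop
\begin{equation}
Mo(C_{L}(j,n'))-Mo(L_{h})=12\,(o_{P}^{st}+2)\,j=24\,j\,(n'+1)>0. \nonumber
\end{equation}
Combining the non-increasing earlier steps with this strict final step yields $Mo(G)>Mo(L_h)$ for every $G\not\cong L_h$; without this (or an equivalent) analysis of the equality condition, the ``equality iff $G\cong L_h$'' part of the lemma is not proved.
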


By Corollary \ref{c3-2}, Lemma \ref{l3-3} and Lemma \ref{l3-4}, we can obtain the smallest Mostar index among $\mathscr{P}_{h}$. The proof of Theorem \ref{t3-5} follows from the same arguments as the proof of Theorem 1.3 of \cite{dngl2020}, thus we omit the proof.
\begin{theorem}\label{t3-5}
Let $G\in \mathscr{P}_{h}$, then
$Mo(G)\geq Mo(L_{h})=72\lfloor\frac{h}{2}\rfloor\lceil\frac{h}{2}\rceil-24\lfloor\frac{h}{2}\rfloor$, with equality iff $G\cong L_{h}$.
\end{theorem}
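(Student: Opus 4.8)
The plan is to reduce the general case to the chain case already settled in Lemma~\ref{l3-4}. Recall that $G\in\mathcal{C}_h$ means $G$ has no full-hexagon, and for such $G$ Lemma~\ref{l3-4} gives $Mo(G)\ge Mo(L_h)$ with equality iff $G\cong L_h$. So it suffices to show that whenever $G\in\mathscr{P}_h$ possesses at least one full-hexagon, one can strictly decrease $Mo$ by a local move that destroys one full-hexagon, pushing $G$ toward a chain. I would therefore argue by induction on the number $i$ of full-hexagons, that is, on the index with $G\in\mathscr{P}_{h,i}$; the base case $i=0$ is exactly Lemma~\ref{l3-4}.

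For the inductive step, view $G$ as a tree whose nodes are the hexagons and whose edges are the quadrilaterals. Since a hexagon carries at most three quadrilaterals, this tree has maximum degree $3$, and the full-hexagons are precisely its degree-$3$ nodes. I would select a full-hexagon $R$ together with two of its branches that are pendant chains, the selection being made so that the larger of the two chains to be merged is as small as possible globally: among all full-hexagons having two pendant-chain branches (deepest branch-nodes always provide such), pick the one, and the pair, minimising this larger length. At $R$ I first apply Corollary~\ref{c3-2} to replace each of the two chosen pendant chains by a linear chain with the same number of hexagons without increasing $Mo$, so that $R$ now carries two linear branches $L_j$ and $L_k$ with $j\le k$. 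I then invoke Lemma~\ref{l3-3} to merge $L_j$ and $L_k$ into a single linear branch $L_{j+k}$: this turns $R$ into a $2$-hexagon, strictly decreases $Mo$, and produces a phenylene $G'\in\mathscr{P}_{h,i-1}$.

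The main obstacle is verifying the hypothesis $n\ge\max\{j,k\}=k$ required by Lemma~\ref{l3-3}, where $n$ is the number of hexagons in the main body, namely $n=h-1-j-k$. This is exactly where the choice of $R$ matters. If the third branch at $R$ is itself a pendant chain, then $R$ carries three pendant chains and I merge the two shortest of the three, keeping the longest as the main body, so $n$ equals the longest length and dominates $k$. Otherwise the third branch contains a further full-hexagon, and I claim the main body still has at least $k$ hexagons: if it had fewer than $k$, then a deepest full-hexagon inside it would be a reducible full-hexagon whose two away-pointing pendant chains both lie in that region and hence are shorter than $k$, contradicting the minimality built into the choice of $R$. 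Thus $n\ge k$ always holds for the chosen $R$, Lemma~\ref{l3-3} applies, and $Mo(G)>Mo(G')$.

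Finally, applying the inductive hypothesis to $G'\in\mathscr{P}_{h,i-1}$ gives $Mo(G')\ge Mo(L_h)$, whence $Mo(G)>Mo(G')\ge Mo(L_h)$ whenever $G$ has a full-hexagon. Combined with the base case this yields $Mo(G)\ge Mo(L_h)$ for all $G\in\mathscr{P}_h$, and any equality forces $i=0$, so that by Lemma~\ref{l3-4} it occurs precisely when $G\cong L_h$. The only genuinely delicate point is the selection argument in the third paragraph, which certifies the applicability of Lemma~\ref{l3-3} at every stage; the remaining steps are direct appeals to Corollary~\ref{c3-2}, Lemma~\ref{l3-3}, and Lemma~\ref{l3-4}.
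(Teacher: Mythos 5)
Your proposal follows the same route as the paper's own proof: reduce to the chain case of Lemma~\ref{l3-4} by repeatedly straightening the pendant branches at a full-hexagon via Corollary~\ref{c3-2} and then merging two of them via Lemma~\ref{l3-3}, which strictly decreases the Mostar index and lowers the number of full-hexagons until a chain is reached. In fact your write-up is more careful than the paper's (which never checks the hypothesis $n\ge\max\{j,k\}$ of Lemma~\ref{l3-3}): your minimal-larger-branch selection, with the deepest-full-hexagon contradiction in the case where the third branch is not a chain, correctly certifies that the lemma is applicable at every step.
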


\section{The second minimal tree-like phenylenes}
\hskip 0.6cm
Bearing in mind that $C_{L}(j,n)\in \mathcal{C}_{h,1}$ is the graph including two vertex-disjoint linear phenylene chains $L_{j}$ and $L_{n}$ as subgraphs, where $j\leq n$, and $h=j+n+1$. $C_{L}(j,k,n)\in \mathcal{C}_{h,2}$ is the graph including three vertex-disjoint linear phenylene chains $L_{j}$, $L_{k}$ and $L_{n}$ as subgraphs, where $j\leq n$, the second segment is a zigzag segment and $h=j+k+n+2$.
In the following, we give some useful results for our proofs of main theorem.

Using the cut method to $C_{L}(j,h-j-1)$ and $L_{h}$, and comparing the change of Mostar index among $C_{L}(j,h-j-1)$ and $L_{h}$, we also have
\begin{lemma}\label{l4-1}\cite{clxz2021}
Let $G= C_{L}(j,h-j-1)$, $1\leq j\leq \lfloor\frac{h-1}{2}\rfloor $, be the phenylene chain with $h$ hexagons, then $Mo(C_{L}(1,h-2)<Mo(C_{L}(2,h-3)<Mo(C_{L}(3,h-4)<\cdots <Mo(C_{L}(\lfloor\frac{h-1}{2}\rfloor,\lceil\frac{h-1}{2}\rceil)$.
\end{lemma}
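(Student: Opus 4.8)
The plan is to compute $Mo(C_{L}(j,h-j-1))$ in closed form by the cut method and then read off the monotonicity in $j$. First I would apply Lemma~\ref{l2-2}, sorting the orthogonal cuts of $C_{L}(j,h-j-1)$ into two families. The quadrilateral (rung) cuts are insensitive to the position of the turn: the cut through the $i$-th quadrilateral always has $o_{P}^{uv}=2$ and separates the first $i$ hexagons from the remaining $h-i$, so these contribute exactly $12\sum_{i=1}^{h-1}|h-2i|$, the same value as in $L_{h}$. Hence, as the displayed hint suggests, all of the $j$-dependence is carried by the hexagon (slant) cuts, and it suffices to track how these change relative to $L_{h}$ when the single turn is inserted at the $(j+1)$-st hexagon.

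For the second family I would evaluate each slant cut's contribution $6\,o_{P}^{uv}|r_{P}^{u}-r_{P}^{v}|$, being careful near the turn-hexagon, where the two square directions sit in meta position. A convenient shortcut is to observe that $C_{L}(j,h-j-1)\cong P_{L}(0,j,h-j-1)$ — deleting the third branch of a full-hexagon leaves exactly a turn-hexagon — so one may instead specialize the formula of Lemma~\ref{l2-3} at its smallest argument. Either route yields, writing $n=h-j-1$ and after collecting terms, a single \emph{downward-opening} quadratic in $j$,
\[
Mo(C_{L}(j,h-j-1))=6\bigl(-4j^{2}+4hj+c_{h}\bigr),
\]
where $c_{h}$ depends only on $h$ (and on the parity of $h$); the vertex of this parabola sits at $j=h/2$.

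The conclusion is then immediate. The discrete difference is
\[
Mo(C_{L}(j+1,h-j-2))-Mo(C_{L}(j,h-j-1))=24\,(h-2j-1),
\]
which is strictly positive for every $j$ with $1\le j\le\lfloor\frac{h-1}{2}\rfloor-1$, since there $2j+1\le 2\lfloor\frac{h-1}{2}\rfloor-1\le h-2<h$. Thus $Mo(C_{L}(j,h-j-1))$ strictly increases over $1\le j\le\lfloor\frac{h-1}{2}\rfloor$, which is exactly the asserted chain of inequalities.

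The main obstacle is the bookkeeping of the slant cuts, not the final estimate. One must handle the absolute values $|h+1-2i|$ and $|h-2i|$, whose summands change sign around $i=h/2$; this forces the usual split according to whether the longer branch length $n=h-j-1$ satisfies $n\le\lfloor\frac{h}{2}\rfloor$ or $n\ge\lfloor\frac{h}{2}\rfloor+1$, together with the parity of $h$. The reassuring point is that each of these cases produces the same leading part $-4j^{2}+4hj$, the constant $c_{h}$ absorbing all of the case distinctions, so the downward-parabola shape — and hence the monotonicity on $[1,\lfloor\frac{h-1}{2}\rfloor]$ — is uniform. If one prefers the $P_{L}(0,j,n)$ route instead, the only delicate point is justifying the use of Lemma~\ref{l2-3} at the boundary value $0$ of its smallest branch, which can be verified directly against the cut computation at the overlap of the two regimes.
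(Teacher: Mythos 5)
The paper does not actually prove this lemma: it is imported from \cite{clxz2021}, with only the one-line indication that it follows by applying the cut method to $C_{L}(j,h-j-1)$ and $L_{h}$ and comparing the two. So your proposal can only be judged against that hint, and on its own terms it is essentially correct, via a genuinely different and arguably more self-contained route: instead of comparing each bent chain with $L_{h}$, you identify $C_{L}(j,n)\cong P_{L}(0,j,n)$ (valid, since the three quadrilaterals of a full-hexagon sit pairwise in meta position, so deleting one branch leaves exactly a turn-hexagon) and specialize Lemma~\ref{l2-3}, concluding by discrete differences. I verified the computation: in the regime $n=h-j-1\ge\lfloor\frac{h}{2}\rfloor+1$, collecting terms in Lemma~\ref{l2-3}(2) gives $Mo=6(-4j^{2}+4hj+c_{h})$ with $c_{h}=3(h-1)^{2}+4h-3$ for even $h$ and $c_{h}=3(h-1)^{2}+4h-4$ for odd $h$; moreover this closed form reproduces the values of $Mo(C_{L}(1,h-2))$ and $Mo(C_{L}(2,h-3))$ recorded in Lemmas~\ref{l4-2} and~\ref{l5-1}, which simultaneously confirms that the $j=0$ specialization you flagged as delicate is legitimate.

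One sentence in your writeup is, however, false and must be repaired: the claim that ``each of these cases produces the same leading part $-4j^{2}+4hj$''. In Case (1) of Lemma~\ref{l2-3} (i.e. $n\le\lfloor\frac{h}{2}\rfloor$), collecting terms gives $Mo=24(-4j^{2}+(4h-5)j+2h-2)$, a different quadratic; also, the two regimes are disjoint, so there is no ``overlap'' at which to cross-check them. What saves your argument is that, under the standing constraint $j\le n$, that case occurs for exactly one value of $j$ in the admissible range, namely the balanced endpoint $j=\lfloor\frac{h-1}{2}\rfloor$, and its value there happens to lie on the parabola coming from the other case: both formulas give $24h^{2}-12h-24$ at $j=\frac{h}{2}-1$ when $h$ is even, and $24h^{2}-12h-12$ at $j=\frac{h-1}{2}$ when $h$ is odd. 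Consequently the final difference is still $24(h-2j-1)$, namely $72$ (even $h$) resp. $48$ (odd $h$), and the full chain of strict inequalities follows. Replace the same-leading-part assertion by this explicit endpoint check and your proof is complete.
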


\begin{figure}[ht!]
  \centering
  \scalebox{.08}[.08]{\includegraphics{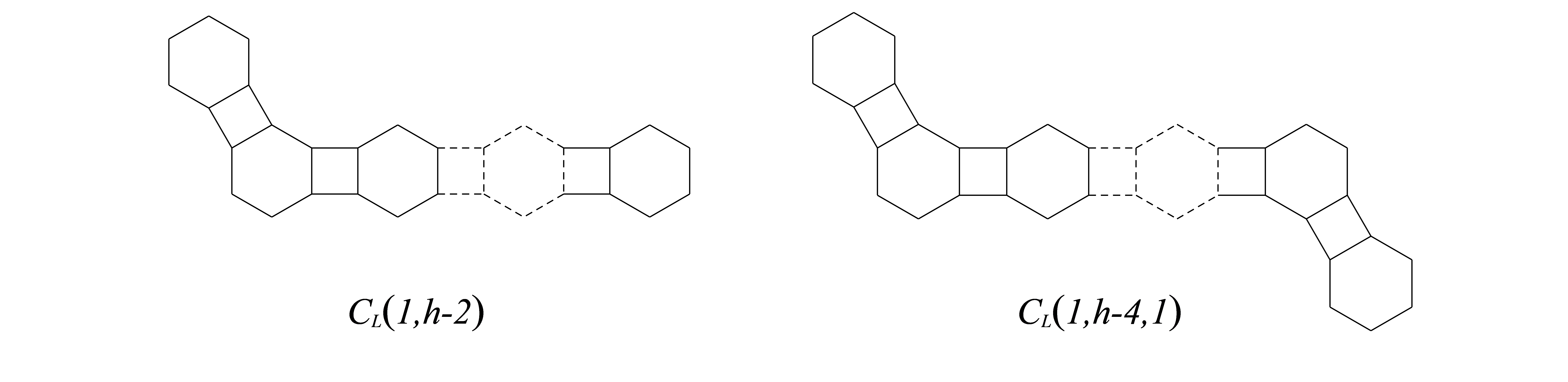}}
  \caption{The phenylenes $C_{L}(1,h-2)$, $C_{L}(1,h-4,1)$ of Lemma \ref{l4-2}.}
 \label{fig-3}
\end{figure}

By Lemma \ref{l3-1}, Lemma \ref{l4-1} and Lemma \ref{l3-4}, we can also obtain the second minimum Mostar index among phenylene chains $\mathcal{C}_h$.
\begin{lemma}\label{l4-2}\cite{clxz2021}
If $G\in \mathcal{C}_h$ and $G\not\cong L_h$, then $Mo(G)\geq Mo(C_{L}(1,h-2))=Mo(C_{L}(1,h-4,1))= 72\lfloor\frac{h}{2}\rfloor\lceil\frac{h}{2}\rceil-24\lfloor\frac{h}{2}\rfloor+24(h-1)$ with equality iff
$G\cong C_{L}(1,h-2)$ or $G\cong C_{L}(1,h-4,1)$, see Figure \ref{fig-3}.
\end{lemma}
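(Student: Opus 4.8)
The plan is to combine one cut-method computation with an inductive straightening argument driven by Lemma~\ref{l3-1}, the two extremal graphs being tied together by the \emph{equality} case of that lemma.

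First I would record, via the cut method (Lemma~\ref{l2-2}), that both $C_L(1,h-2)$ and $C_L(1,h-4,1)$ have Mostar index $V:=72\lfloor\frac h2\rfloor\lceil\frac h2\rceil-24\lfloor\frac h2\rfloor+24(h-1)$. The equality of the two values should not be computed twice: $C_L(1,h-4,1)$ is obtained from $C_L(1,h-2)$ by bending a two-hexagon terminal piece off one end of the long segment, and I would check that this is exactly the borderline instance of Lemma~\ref{l3-1}(1). Indeed, when one straightens $C_L(1,h-4,1)$ at the first non-turn hexagon of the long segment, the branch $L_k$ is the two-hexagon piece ($k=2$); the cut runs along that branch's axis, so $r_P^s=0$ automatically, while the hexagons beyond the \emph{other} turn lie entirely on one side, giving $|r_P^t-r_P^s|\ge 1=k-1$. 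Hence the straightening is non-strict and $Mo(C_L(1,h-4,1))=Mo(C_L(1,h-2))=V$. Since $L_h$ is the unique turn-free chain (Lemma~\ref{l3-4}), every $G\in\mathcal{C}_h\setminus\{L_h\}$ has at least one turn-hexagon.

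I would then prove the bound by induction on the number of turn-hexagons of $G$. Given $G\not\cong L_h$, choose an outermost turn-hexagon and let $L_k$ be the maximal straight branch that ends, at its attachment to the rest, in that turn-hexagon; the terminal segment together with its turn-hexagon gives $k\ge 2$, so Lemma~\ref{l3-1} applies and straightening yields $G'$ with one fewer turn and $Mo(G')\le Mo(G)$. Here $\min\{r_P^s,r_P^t\}=0$ always (the branch side of the axial cut contains no full hexagon), so the straightening is strict precisely when the part of $G'$ lying off the cut axis has fewer than $k-1$ hexagons. If $G'\cong L_h$ then $G$ is a one-turn chain $C_L(j,h-j-1)$ and Lemma~\ref{l4-1} gives $Mo(G)\ge Mo(C_L(1,h-2))=V$ with equality iff $G\cong C_L(1,h-2)$. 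If $G'\not\cong L_h$, the induction hypothesis gives $Mo(G')\ge V$, whence $Mo(G)\ge V$; and $Mo(G)=V$ forces both $Mo(G')=V$ (so $G'\in\{C_L(1,h-2),C_L(1,h-4,1)\}$) and the straightening to be the equality case. It then remains to identify the equality-preimages: I would show that the only chain obtained from $C_L(1,h-2)$ by a single equality-case re-bending is $C_L(1,h-4,1)$, and that $C_L(1,h-4,1)$ admits no equality-case re-bending at all, since any added turn leaves strictly fewer than $k-1$ hexagons off the new cut axis and hence strictly increases $Mo$ above $V$. Combined with the one-turn case, this shows the minimum over $\mathcal{C}_h\setminus\{L_h\}$ is $V$, attained exactly at $C_L(1,h-2)$ and $C_L(1,h-4,1)$.

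The main obstacle is this last identification, i.e.\ the exact bookkeeping of the equality criterion $r_P^t\ge k-1$ versus $r_P^t<k-1$ under re-bending. A single outermost straightening lands on whichever of the two extremal graphs is ``nearer'', so pinning the extremal family to exactly two graphs requires showing that the preimage analysis produces only $C_L(1,h-4,1)$ and no three-turn chain. The arithmetic coincidence $Mo(C_L(1,h-2))=Mo(C_L(1,h-4,1))$ established above, read through the equality case of Lemma~\ref{l3-1}, is precisely the reason two non-isomorphic minimizers occur; verifying the strict/non-strict dichotomy in every sub-configuration is where the real work lies.
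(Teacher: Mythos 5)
Your proposal is correct and follows essentially the same route as the paper: the paper gives no detailed proof of Lemma \ref{l4-2} (it is quoted from \cite{clxz2021}), but it attributes the result to exactly the ingredients you use, namely the straightening transformation of Lemma \ref{l3-1} (whose equality case is what ties $C_{L}(1,h-2)$ to $C_{L}(1,h-4,1)$), the ordering of one-turn chains in Lemma \ref{l4-1}, and Lemma \ref{l3-4}. When writing out the deferred details, the two points to watch are the hypothesis $n\geq k-1$ of Lemma \ref{l3-1} (so straighten at the end whose terminal segment is shorter) and the distinction between the two bending directions $P_{1}$ and $P_{3}$: Lemma \ref{l3-1}(2) is always strict, and this orientation condition—not just the numeric criterion $r_{P}^{t}\geq k-1$—is what excludes the non-zigzag double-bend chain as a spurious third extremal graph.
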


\begin{lemma}\label{l4-3}
Given a phenylene $P_{L}(j,k,n)\in \mathscr{P}_{h,1}$ with three branches $L_{j}, L_{k}, L_{n}$ $(1\leq j\leq k\leq n)$ and $h$ $(h=j+k+n+1)$ hexagons. Then $Mo(P_{L}(j,k,n))>Mo(C_{L}(1,j+k+n-1))$ for $n\geq 2$, and $Mo(P_{L}(1,1,1))<Mo(C_{L}(1,2))$ for $n=1$.
\end{lemma}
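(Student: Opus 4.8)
The plan is to reduce everything to the explicit formulas already available and then compare. The key tool is Lemma \ref{l2-3}, which gives closed forms for $Mo(P_{L}(j,k,n))$ split into two regimes according to whether $n\leq\lfloor\frac{h}{2}\rfloor$ or $n\geq\lfloor\frac{h}{2}\rfloor+1$. On the other side, $C_{L}(1,j+k+n-1)=C_{L}(1,h-2)$ is a phenylene chain, so its Mostar index is computed by Lemma \ref{l4-2} as $72\lfloor\frac{h}{2}\rfloor\lceil\frac{h}{2}\rceil-24\lfloor\frac{h}{2}\rfloor+24(h-1)$ with $h=j+k+n+1$. So the statement is really an inequality between two explicit polynomial expressions in $j,k,n$, and my plan is simply to form the difference $Mo(P_{L}(j,k,n))-Mo(C_{L}(1,h-2))$ and show it is positive under the constraints $1\leq j\leq k\leq n$ and $n\geq 2$.

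First I would handle the case $n\geq 2$ by splitting along the same two regimes that appear in Lemma \ref{l2-3}, and within each regime further split on the parity of $h$, since both $Mo(P_{L})$ (regime 2) and $Mo(C_{L})$ (through the floor/ceiling) depend on parity. In each of these four subcases the difference becomes a genuine polynomial in $j,k,n$ with no floor functions remaining; I would substitute $\lfloor\frac{h}{2}\rfloor$, $\lceil\frac{h}{2}\rceil$ by their values in terms of $h=j+k+n+1$ and then rewrite the resulting polynomial as a manifestly nonnegative combination of the nonnegative quantities $j-1\geq 0$, $k-j\geq 0$, $n-k\geq 0$, $n-2\geq 0$, products thereof, and constants. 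This is the same ``sum of products of nonnegative slack variables'' technique the authors use repeatedly in Lemmas \ref{l3-1} and \ref{l3-3}, so it should go through by routine (if tedious) algebra. The strict inequality should drop out from a strictly positive constant term in at least one of the groupings.

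The case $n=1$ forces $j=k=n=1$ and $h=4$, so here I would just evaluate both sides directly: compute $Mo(P_{L}(1,1,1))$ from Lemma \ref{l2-3} (with $h=4$ even, checking which regime $n=1$ falls into) and compute $Mo(C_{L}(1,2))$ from the $h=4$ instance of Lemma \ref{l4-2}, then observe the numerical inequality $Mo(P_{L}(1,1,1))<Mo(C_{L}(1,2))$ by inspection. This base case is where the inequality reverses, which is expected since with only three hexagons the branched phenylene is essentially the most compact configuration.

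The main obstacle I anticipate is the bookkeeping in the $n\geq\lfloor\frac{h}{2}\rfloor+1$ regime: there the formula for $Mo(P_{L})$ is a full quadratic in $j,k,n$ with many cross terms, and after subtracting the chain value (whose floor/ceiling product $\lfloor\frac{h}{2}\rfloor\lceil\frac{h}{2}\rceil$ itself expands into a quadratic in $j+k+n$), finding a clean nonnegative regrouping is delicate — one must be careful that the constraint $n\geq\lfloor\frac{h}{2}\rfloor+1$ (i.e. roughly $n\geq j+k$) is actually used, since without it the difference need not be positive. I would therefore keep track of the slack $n-(j+k)$ or $n+1-(j+k)$ explicitly and ensure it enters the final nonnegative combination, rather than discarding it. Verifying that the two regimes agree on their common boundary (and that the $n=1$ reversal is consistent with the $n\geq 2$ bound degenerating) would be a useful sanity check before finalizing.
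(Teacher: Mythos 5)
Your overall strategy is exactly the paper's: compute $Mo(P_{L}(j,k,n))$ from Lemma \ref{l2-3}, take the chain value $72\lfloor\frac{h}{2}\rfloor\lceil\frac{h}{2}\rceil-24\lfloor\frac{h}{2}\rfloor+24(h-1)$ from Lemma \ref{l4-2}, split along the two regimes of Lemma \ref{l2-3} (and parity inside the first), regroup the polynomial difference into products of nonnegative slacks, and check $(j,k,n)=(1,1,1)$ (the only $n=1$ case) by direct evaluation, where indeed $288<312$. However, there is a concrete flaw in how you allocate the difficulty. In the regime $n\le\lfloor\frac{h}{2}\rfloor$ your proposed slack set $\{j-1,\,k-j,\,n-k,\,n-2\}$ cannot work: the regime constraint there reads $n\le j+k+1$ ($h$ even) or $n\le j+k$ ($h$ odd), and the Case-1 difference polynomial, e.g.\ $6\{-3j^{2}-3k^{2}-3n^{2}+2kj+6jn+10kn-4k-8j-1\}$, is \emph{negative} at points such as $(j,k,n)=(1,1,n)$ with $n$ large --- points at which all four of your slacks are nonnegative. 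Hence no nonnegative combination of those slacks (and products and constants) can represent it; you must include the regime slack $j+k+1-n$ (resp.\ $j+k-n$) itself, which is precisely what the paper does via terms like $3(n+1)(j+k+1-n)$. Even then the paper's groupings only certify positivity for $n\ge 4$ (resp.\ $n\ge 3$), and a handful of small triples such as $(1,2,2)$, $(1,1,3)$, $(2,2,3)$, $(1,1,2)$ must be checked numerically; your plan should budget for this finite case analysis within the $n\ge 2$ range, not only at $n=1$.

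Conversely, the regime you flag as the main obstacle, $n\ge\lfloor\frac{h}{2}\rfloor+1$, is actually the trivial one: the parity-dependent terms of Lemma \ref{l2-3}(2) and of the chain formula cancel exactly, and the difference collapses (in both parities) to $6\{4(j+n)(k-1)+4kj\}>0$, which is positive for all $j,k\ge 1$ with no use of the constraint $n\ge j+k$ at all. So the worry that this constraint ``must enter the final nonnegative combination'' there is unfounded, while the analogous constraint in the other regime --- which you omit --- is indispensable. With these two corrections (add the regime-1 slack; accept finitely many explicit evaluations), your argument becomes the paper's proof.
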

\begin{proof}
Note that $Mo(C_{L}(1,j+k+n-1)=Mo(C_{L}(1,h-2)=72\lfloor\frac{h}{2}\rfloor\lceil\frac{h}{2}\rceil-24\lfloor\frac{h}{2}\rfloor+24(h-1)$. Then, by Lemma \ref{l2-3}, we have

\noindent {\bf Case 1}. $n\leq \lfloor\frac{h}{2}\rfloor$

\noindent {\bf Subcase 1.1}. If $h$ is even, then $n\leq j+k+1$, and we have
\begin{equation}
\begin{split}
&\quad Mo(P_{L}(j,k,n))-Mo(C_{L}(1,j+k+n-1))\\
&= 6\{-3j^{2}-3k^{2}-3n^{2}+2kj+6jn+10kn-4k-8j-1\}\\
&=  6\{3j(k-j)+3k(n-k)+3(n+1)(j+k+1-n)+k(n-j)+3(k+j)(n-4)+5k\\
& \quad  +j-4\}\\
&=  6\{3j(k-j)+3k(n-k)+3(n+1)(j+k+1-n)+k(n-j)+3(k+j)(n-3)+2k\\
& \quad -2j-4\}>0.  \nonumber
\end{split}
\end{equation}
whenever $n\geq 4$, or $(j,k,n)\in \{(1,1,3), (2,2,3), (1,3,3),(3,3,3)\}$, or $(j,k,n)=(1,2,2)$, whereas $Mo(P_{L}(1,1,1))-Mo(C_{L}(1,2))=-24<0$.

\noindent {\bf Subcase 1.2}. If $h$ is odd, then $n\leq j+k$, and we have
\begin{equation}
\begin{split}
&\quad Mo(P_{L}(j,k,n))-Mo(C_{L}(1,j+k+n-1))\\
&= 6\{-3j^{2}-3k^{2}-3n^{2}+2kj+6jn+10kn-4k-8j\}\\
&= 6\{3j(k-j)+3k(n-k)+3(n+1)(j+k-n)+k(n-j)+3(k+j)(n-4)+3n\\
& \quad +5k+j\}\\
&=  6\{3j(k-j)+3k(n-k)+3(n+1)(j+k-n)+k(n-j)+3(k+j)(n-3)\\
& \quad +2(k-j)+3n\}>0.  \nonumber
\end{split}
\end{equation}
whenever $n\geq 3$, or $(j,k,n)\in \{(2,2,2),(1,1,2)\}$.

\noindent {\bf Case 2}. $n\geq \lfloor\frac{h}{2}\rfloor+1$

$Mo(P_{L}(j,k,n))-Mo(C_{L}(1,j+k+n-1))=6\{4(j+n)(k-1)+4kj\}>0$.

The proof is completed
\end{proof}
\ \notag\

By Lemma \ref{l3-3}, Theorem \ref{t3-5}, Lemma \ref{l4-2} and Lemma \ref{l4-3}, we obtain the second minimum Mostar index of tree-like phenylenes $\mathscr{P}_{h}$.
\begin{theorem}\label{t4-4}
Let $G\in \mathscr{P}_{h}$ $(h\geq 4)$, and $G\ncong L_{h}$, then

$(1)$ If $h\geq 5$, $Mo(G)\geq Mo(C_{L}(1,h-2))=Mo(C_{L}(1,h-4,1))=72\lfloor\frac{h}{2}\rfloor\lceil\frac{h}{2}\rceil-24\lfloor\frac{h}{2}\rfloor+24(h-1)$ with equality iff
$G\cong C_{L}(1,h-2)$ or $G\cong C_{L}(1,h-4,1)$.

$(2)$ If $h=4$, $Mo(G)\geq Mo(P_{L}(1,1,1))=288$, with equality iff $G\cong P_{L}(1,1,1)$.
\end{theorem}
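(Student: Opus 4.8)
The plan is to partition $\mathscr{P}_h$ into the phenylene chains $\mathcal{C}_h$ and those phenylenes carrying at least one full-hexagon, and to show that for $h\geq 5$ every member of the second class lies strictly above the chain bound $Mo(C_L(1,h-2))$; the second minimum is then forced among chains, where Lemma \ref{l4-2} already identifies it. Thus for $G\in\mathcal{C}_h$ with $G\ncong L_h$, Lemma \ref{l4-2} gives $Mo(G)\geq Mo(C_L(1,h-2))=Mo(C_L(1,h-4,1))$ with equality iff $G\cong C_L(1,h-2)$ or $G\cong C_L(1,h-4,1)$, which is precisely the bound and equality characterization asserted in part $(1)$.

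For $G\notin\mathcal{C}_h$, say with $m\geq 1$ full-hexagons, I would run the reduction from the proof of Theorem \ref{t3-5} but halt one step early. First, using Corollary \ref{c3-2} I would linearize each branch attached to a full-hexagon, producing $G'$ with $Mo(G)\geq Mo(G')$ and $G'$ still carrying $m$ full-hexagons but now with linear branches. While $m\geq 2$, I would repeatedly invoke the transformation of Lemma \ref{l3-3}, each step merging two linear branches at a full-hexagon into a single linear branch, strictly decreasing both the number of full-hexagons and the Mostar index. Iterating until exactly one full-hexagon survives leaves a phenylene of the form $P_L(j,k,n)$ with $j\leq k\leq n$ and $j+k+n+1=h$, and yields $Mo(G)\geq Mo(P_L(j,k,n))$.

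It remains to compare $P_L(j,k,n)$ with $C_L(1,h-2)$ through Lemma \ref{l4-3}. The decisive point is that when $h\geq 5$ the constraints $1\leq j\leq k\leq n$ and $j+k+n=h-1\geq 4$ force $n\geq 2$, since $n=1$ would give $j=k=n=1$ and hence $h=4$. Lemma \ref{l4-3} then gives $Mo(P_L(j,k,n))>Mo(C_L(1,h-2))$, so $Mo(G)\geq Mo(P_L(j,k,n))>Mo(C_L(1,h-2))$ strictly for every non-chain $G$; no non-chain can meet the second minimum, and part $(1)$ follows. For part $(2)$, with $h=4$, the only phenylene in $\mathscr{P}_4$ with a full-hexagon is $P_L(1,1,1)$, because $j+k+n=3$ with $1\leq j\leq k\leq n$ forces $j=k=n=1$. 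Here Lemma \ref{l4-3} reverses the inequality, $Mo(P_L(1,1,1))<Mo(C_L(1,2))$, and Lemma \ref{l2-3} gives $Mo(P_L(1,1,1))=288$, against $Mo(L_4)=240$ and $Mo(C_L(1,2))=312$; hence among all $G\ncong L_4$ the minimum value $288$ is attained exactly at $P_L(1,1,1)$.

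The step I expect to be the main obstacle is justifying the reduction rigorously: one must verify that Corollary \ref{c3-2} and Lemma \ref{l3-3} really do apply to an arbitrary phenylene with several full-hexagons and arbitrary (possibly themselves branching) subtrees dangling from them, and that these two transformations compose so as to terminate exactly on some $P_L(j,k,n)$ with the correct count $h=j+k+n+1$. The comforting feature is that, because the final comparison supplied by Lemma \ref{l4-3} is strict, I never need to track the equality case through the reduction for $h\geq 5$; strictness at the last step alone rules every non-chain out of the second minimum.
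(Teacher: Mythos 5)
Your proposal is correct and takes essentially the same route as the paper: the paper's own proof is just the citation ``by Lemma \ref{l3-3}, Theorem \ref{t3-5}, Lemma \ref{l4-2} and Lemma \ref{l4-3}'', i.e., exactly your combination of the chain case (Lemma \ref{l4-2}), the reduction of non-chains to some $P_{L}(j,k,n)$ via Corollary \ref{c3-2} and Lemma \ref{l3-3}, and the strict comparison of Lemma \ref{l4-3}, with the same split between $h\geq 5$ and $h=4$. The one fine point you flag --- verifying that the reduction steps always apply (in particular the hypothesis $n\geq\max\{j,k\}$ of Lemma \ref{l3-3}) and terminate at a $P_{L}(j,k,n)$ --- is left equally implicit in the paper, so your write-up matches its level of rigor.
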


\section{The third minimal tree-like phenylenes}
\hskip 0.6cm
Bearing in mind $C_{L}(t_{1},t_{2},t_{3},\cdots, t_{k}, t_{k+1})$ is the phenylene chian with $h$ hexagons and exactly $k+1$ segments $S_1, S_2,\ldots,S_{k+1}$ of lengths $t_{1}+1,t_{1}+2,t_{3}+2,\ldots,t_{k}+2, t_{k+1}+1$, respectively, where $S_1$ and $S_{k+1}$ are the terminal segments, all $S_i$ $(2\leq i\leq k)$ are zagzig segments, $1\leq t_1\leq t_{k+1}$, and $\sum\limits_{i=1}^{k+1}t_{i}+k=h$.
In the following, we give the following Lemma \ref{l5-1} and Lemma \ref{l5-2}, which are important for our proofs of main theorem \ref{t5-4}.
At first, we give the third minimum Mostar index among phenylene chains.

\begin{figure}[ht!]
  \centering
  \scalebox{.077}[.077]{\includegraphics{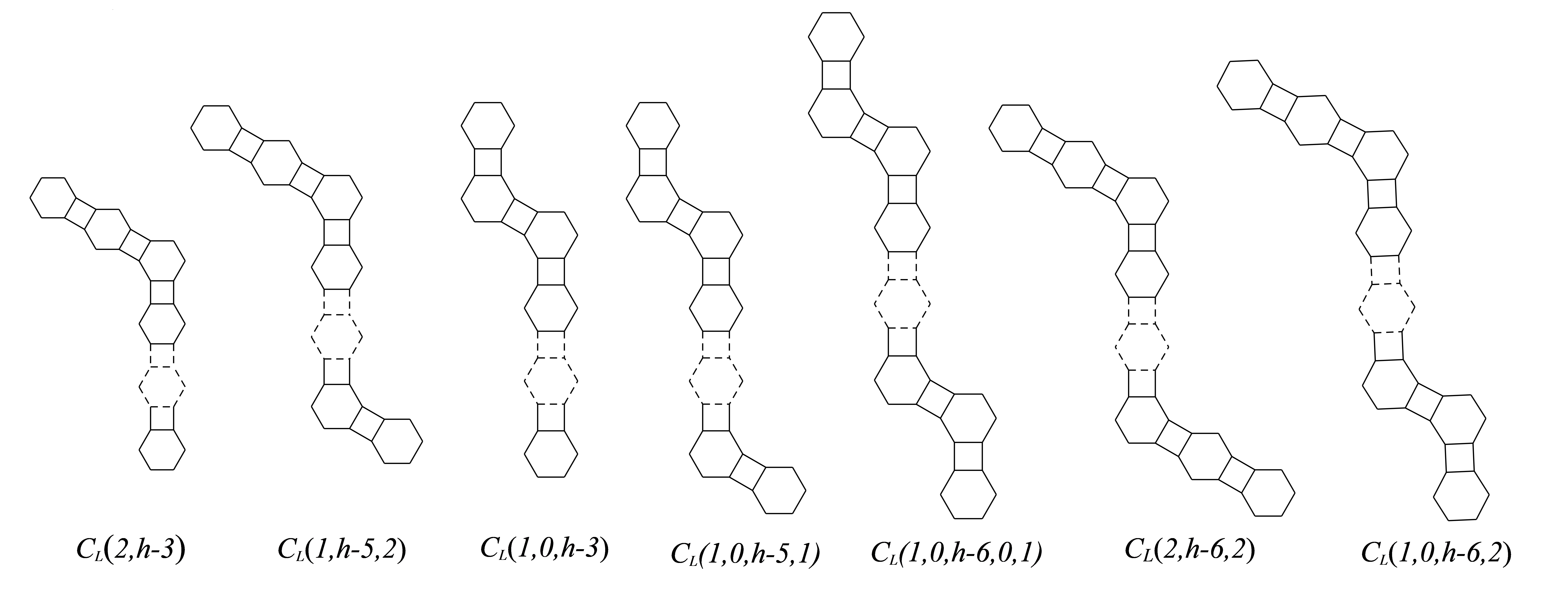}}
  \caption{Seven phenylene chains of Lemma \ref{l5-1}.}
 \label{fig-4}
\end{figure}

Let $C_{L}(j,k,n)\in \mathcal{C}_{h,2}$ with second segment is a zigzag segment. By Lemma \ref{l3-1}, Lemma \ref{l4-1}, we also have
\begin{lemma}\label{l5-1}\cite{clxz2021}
If $G\in \mathcal{C}_h$ and $G\not\cong \{L_h, C_{L}(1,h-2), C_{L}(1,h-4,1)\}$, then $Mo(G)\geq 72\lfloor\frac{h}{2}\rfloor\lceil\frac{h}{2}\rceil-24\lfloor\frac{h}{2}\rfloor+48(h-2)$ with equality iff
$G\in \{ C_{L}(2,h-3), C_{L}(1,0,h-3), C_{L}(1,h-5,2), C_{L}(2,h-6,2), C_{L}(1,0,h-5,1), C_{L}(1,0,h-6,2), C_{L}(1,0,h-6,0,1)\}$, see Figure \ref{fig-4}.
\end{lemma}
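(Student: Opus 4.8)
The plan is to prove both the inequality and the equality characterization by combining the cut method (Lemma~\ref{l2-2}) with the two monotone transformations already available, organizing all chains according to their number of turn-hexagons (equivalently, their number of segments). Write $\beta_h := 72\lfloor\frac{h}{2}\rfloor\lceil\frac{h}{2}\rceil - 24\lfloor\frac{h}{2}\rfloor + 48(h-2)$ for the target value. Since $L_h$ is excluded, every admissible $G$ has at least one turn-hexagon. First I would dispose of the chains with exactly one turn-hexagon: these are precisely the graphs $C_{L}(j,h-j-1)$ with $1\le j\le\lfloor\frac{h-1}{2}\rfloor$, and Lemma~\ref{l4-1} puts them in a strict increasing order $Mo(C_{L}(1,h-2))<Mo(C_{L}(2,h-3))<\cdots$. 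As $C_{L}(1,h-2)$ is the excluded second minimum, the only one-turn candidate for the third minimum is $C_{L}(2,h-3)$, and a direct cut-method evaluation shows $Mo(C_{L}(2,h-3))=\beta_h$ while every later term is strictly larger.

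For chains with two or more turn-hexagons, the plan is to straighten bends one at a time using Lemma~\ref{l3-1}. Viewing such a chain as $P(s,t)\Box L_k(x_{k,i},x_{k,i+1})$, where $L_k$ is a terminal segment attached at the $2$-vertices $s,t$ of the junction turn-hexagon, Lemma~\ref{l3-1} gives $Mo(P_2)\le Mo(P_1)$ and $Mo(P_2)<Mo(P_3)$, so replacing a bent terminal segment by a straight continuation never increases, and usually strictly decreases, the Mostar index. Iterating this move drives any chain toward $L_h$; a chain can therefore attain only the minimal, second-minimal, or third-minimal value if it is at most ``two straightening steps'' away from $L_h$, and the equality branch of Lemma~\ref{l3-1}(1)---which requires $\min\{r_P^s,r_P^t\}=0$ and $|r_P^t-r_P^s|\ge k-1$---records exactly when such a step is free. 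The crux is then to enumerate all chains that are local minima for these moves: they must consist of one long linear segment carrying only short ($L_1$ or $L_2$) branches at its extremities, in the handful of zigzag/non-zigzag arrangements permitted by the equality condition. This enumeration is designed to produce precisely the six multi-turn graphs in the statement, $C_{L}(1,0,h-3)$, $C_{L}(1,h-5,2)$, $C_{L}(2,h-6,2)$, $C_{L}(1,0,h-5,1)$, $C_{L}(1,0,h-6,2)$ and $C_{L}(1,0,h-6,0,1)$, alongside $C_{L}(2,h-3)$ from the one-turn case.

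Finally I would close the argument by evaluating $Mo$ directly with the cut method for each of the seven candidates and checking that all equal $\beta_h$; together with the strict inequalities produced by the straightening and shifting steps, this yields both $Mo(G)\ge\beta_h$ and the exact list of extremal graphs. The main obstacle is the enumeration step: it is easy to show that minimizers carry only short end-branches, but verifying that \emph{exactly} these seven configurations tie at $\beta_h$---and that no alternative placement of a length-$2$ zigzag segment, or of a second short branch, produces either a smaller value or an additional minimizer---demands a careful, case-by-case tracking of the edge-cut multiplicities $o_P^{uv}$ and the hexagon counts $r_P^u,r_P^v$ for each arrangement. The interplay between a branch's length, its zigzag versus non-zigzag orientation, and whether the junction sits at an extreme end of the chain is exactly where the equality versus strict-inequality distinction is decided, so this bookkeeping is the delicate heart of the proof.
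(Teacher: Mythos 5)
Your plan uses exactly the toolkit the paper itself points to: note first that the paper never proves this lemma at all --- it is imported from \cite{clxz2021}, prefaced only by the remark that it follows ``By Lemma \ref{l3-1}, Lemma \ref{l4-1}'' --- and your skeleton (Lemma \ref{l4-1} for chains with one turn-hexagon, the straightening moves of Lemma \ref{l3-1} with their equality conditions for chains with several turn-hexagons, then direct cut-method evaluation of the seven candidates) is precisely the derivation that remark suggests. Your one-turn case is complete and correct: by the strict ordering of Lemma \ref{l4-1}, once $C_{L}(1,h-2)$ is excluded the unique one-turn candidate is $C_{L}(2,h-3)$, whose value is your $\beta_h$, and all later terms are strictly larger.

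The gap lies in the multi-turn case, and it is twofold. First, the bridging principle you invoke --- that a chain can attain one of the three smallest values only if it is ``at most two straightening steps away from $L_h$'' --- is not a valid argument, because step-distance to $L_h$ does not control the rank of the Mostar value in either direction: $C_{L}(\lfloor\frac{h-1}{2}\rfloor,\lceil\frac{h-1}{2}\rceil)$ is a single straightening step from $L_h$ yet by Lemma \ref{l4-1} has the \emph{largest} value among one-turn chains, while $C_{L}(1,0,h-6,0,1)$ has four turn-hexagons, hence four bends to remove, yet ties for the third-minimal value because the relevant steps are all equality cases of Lemma \ref{l3-1}(1). Counting only strict steps does not repair this, since a single strict step can decrease the index by as little as $24$ (far less than the gap $24(h-3)$ between the second and third values) or by a large amount, so the number of strict steps bounds neither the value nor its rank. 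Second, the enumeration you correctly flag as ``the delicate heart'' --- determining exactly which configurations of bends can satisfy the equality conditions $\min\{r_P^s,r_P^t\}=0$ and $|r_P^t-r_P^s|\ge k-1$ simultaneously at every bend, and showing that every other multi-turn chain exceeds $\beta_h$ strictly --- is asserted (``designed to produce precisely the six multi-turn graphs'') rather than carried out. That case analysis is not a finishing detail; it \emph{is} the content of the lemma, and it is what the cited reference \cite{clxz2021} supplies. As written, the proposal reconstructs the known answer and the correct tools but does not contain a proof of the characterization of equality.
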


\begin{lemma}\label{l5-2}
Given a phenylene $G=P_{L}(j,k,n)\in \mathscr{P}_{h,1}$ with three branches $L_{j}, L_{k}, L_{n}$ $(1\leq j\leq k\leq n)$ and $h$ $(h=j+k+n+1)$ hexagons. Then

$(1)$ If $G\notin \{P_{L}(1,1,h-3), P_{L}(1,2,2), P_{L}(2,2,2)\}$, then $Mo(G)> Mo(C_{L}(2,h-3))$.

$(2)$ If $G\in \{P_{L}(1,1,h-3), P_{L}(2,2,2)\}$, then $Mo(G)< Mo(C_{L}(2,h-3))$.

$(3)$ If $G\cong P_{L}(1,2,2)$, then $Mo(G)= Mo(C_{L}(2,3))$.

\end{lemma}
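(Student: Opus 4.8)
The plan is to prove all three parts by direct computation, comparing the closed-form expressions for $Mo(P_{L}(j,k,n))$ supplied by Lemma \ref{l2-3} with the value of $Mo(C_{L}(2,h-3))$ recorded in Lemma \ref{l5-1}. First I would put $Mo(C_{L}(2,h-3))$ into polynomial form free of floors and ceilings by splitting on the parity of $h$: since $72\lfloor\frac{h}{2}\rfloor\lceil\frac{h}{2}\rceil-24\lfloor\frac{h}{2}\rfloor$ equals $18h^{2}-12h$ for even $h$ and $18h^{2}-12h-6$ for odd $h$, one obtains $Mo(C_{L}(2,h-3))=18h^{2}+36h-96$ when $h$ is even and $18h^{2}+36h-102$ when $h$ is odd. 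Substituting $h=j+k+n+1$ then renders $Mo(C_{L}(2,h-3))$ as an explicit quadratic in $j,k,n$ in each parity.

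Next I would follow exactly the case division used in Lemma \ref{l2-3}, namely Case 1 with $n\leq\lfloor\frac{h}{2}\rfloor$ and Case 2 with $n\geq\lfloor\frac{h}{2}\rfloor+1$, each refined by the parity of $h$. In every resulting sub-case I would form the difference $D:=Mo(P_{L}(j,k,n))-Mo(C_{L}(2,h-3))$, which by the preceding step is a concrete polynomial in $j,k,n$. The key maneuver, exactly as in the proof of Lemma \ref{l4-3}, is to regroup $D$ as a sum of terms that are manifestly nonnegative under the constraints $1\leq j\leq k\leq n$ together with the relevant case bound on $n$ (for instance products of the shape $j(k-j)$, $k(n-k)$, $(n+1)(j+k+1-n)$, $(k+j)(n-3)$ and similar nonnegative quantities, plus small constants). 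Once $D$ is written in such a form its sign is read off immediately, and the only triples where $D$ could fail to be positive are those forcing enough of the summands to vanish at once.

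Those boundary triples are precisely where the three parts of the lemma diverge, so I would isolate them and evaluate both invariants numerically. This confirms $Mo(P_{L}(1,2,2))=Mo(C_{L}(2,3))$ (both equal $768$, giving part $(3)$), $Mo(P_{L}(1,1,h-3))<Mo(C_{L}(2,h-3))$ and $Mo(P_{L}(2,2,2))=1008<1032=Mo(C_{L}(2,4))$ (part $(2)$), while every remaining triple yields $D>0$ (part $(1)$). I would also record that $P_{L}(1,1,h-3)$ falls in Case 1 for small $h$ and in Case 2 once $h$ is large enough, so its comparison must be checked within whichever sub-case applies.

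The main obstacle is the regrouping step: the difficulty lies not in computing $D$ but in choosing a decomposition into nonnegative pieces that is tight enough to vanish exactly on the three exceptional triples and strictly positive otherwise, so that the strict inequalities of part $(1)$, the reversed inequalities of part $(2)$, and the exact equality of part $(3)$ all emerge cleanly. Because the exceptions comprise the $h$-dependent family $P_{L}(1,1,h-3)$ together with the two sporadic triples $P_{L}(1,2,2)$ and $P_{L}(2,2,2)$, care is needed to keep the decomposition uniform across the parity and case splits while still pinning down this precise exceptional set.
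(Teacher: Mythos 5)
Your proposal is correct and follows essentially the same route as the paper: both compare $Mo(P_{L}(j,k,n))$ from Lemma \ref{l2-3} against the closed form of $Mo(C_{L}(2,h-3))$, split into the cases $n\leq\lfloor\frac{h}{2}\rfloor$ and $n\geq\lfloor\frac{h}{2}\rfloor+1$ refined by the parity of $h$, regroup the difference into manifestly nonnegative products (exactly in the style of Lemma \ref{l4-3}), and settle the exceptional triples $P_{L}(1,1,h-3)$, $P_{L}(1,2,2)$, $P_{L}(2,2,2)$ by direct evaluation. Your numerical checks (e.g.\ $768=768$ for part $(3)$ and $1008<1032$ for part $(2)$) and your observation that $P_{L}(1,1,h-3)$ migrates from Case 1 to Case 2 at $h=7$ agree with the paper's computations, so the plan as described carries through.
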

\begin{proof}
Note that $Mo(C_{L}(2,j+k+n-2)=Mo(C_{L}(2,h-3)=72\lfloor\frac{h}{2}\rfloor\lceil\frac{h}{2}\rceil-24\lfloor\frac{h}{2}\rfloor+48(h-2)$. Then by Lemma \ref{l2-3}, we have

\noindent {\bf Case 1}. $n\leq \lfloor\frac{h}{2}\rfloor$

\noindent {\bf Subcase 1.1}. If $h$ is even, then $n\leq j+k+1$, and we have
\begin{equation}
\begin{split}
&\quad Mo(P_{L}(j,k,n))-Mo(C_{L}(2,j+k+n-2))\\
&=  6\{-3j^{2}-3k^{2}-3n^{2}+2kj+6jn+10kn-8k-12j-4n+7\}\\
&=  6\{3j(k-j)+3k(n-k)+(3n+7)(j+k+1-n)+k(n-j)+3(k+j)(n-6)\\
& \quad +3k-j\}\\
&=  6\{3j(k-j)+3k(n-k)+(3n+7)(j+k+1-n)+k(n-j)+3(k+j)(n-5)-4j\}\\
&=  6\{3j(k-j)+3k(n-k)+(3n+7)(j+k+1-n)+k(n-j)+3(k+j)(n-4)-7j\\
& \quad -3k\}>0.  \nonumber
\end{split}
\end{equation}
whenever $n\geq 6$, or $(j,k,n)\in \{(5,5,5), (3,5,5), (1,5,5),(4,4,5),(2,4,5),(3,3,5),(1,3,5),$ $(2,2,5)\}$, or $(j,k,n)\in \{(3,4,4), (1,4,4), (2,3,4),(1,2,4)\}$ , or $(j,k,n)\in \{(3,3,3), (1,3,3),$ $ (2,2,3)\}$, whereas
$Mo(P_{L}(1,2,2))-Mo(C_{L}(2,3))=0$, $Mo(P_{L}(1,1,3))-Mo(C_{L}(2,3))=-24<0$, $Mo(P_{L}(1,1,1))-Mo(C_{L}(1,2))=-48<0$.

\noindent {\bf Subcase 1.2}. If $h$ is odd, then $n\leq j+k$, and we have
\begin{equation}
\begin{split}
&\quad Mo(P_{L}(j,k,n))-Mo(C_{L}(2,j+k+n-2))\\
&= 6\{-3j^{2}-3k^{2}-3n^{2}+2kj+6jn+10kn-8k-12j-4n+8\}\\
&=  6\{3j(k-j)+3k(n-k)+(3n+4)(j+k-n)+k(n-j)+3(k+j)(n-5)+3k-j\\
& \quad +8\}\\
&=  6\{3j(k-j)+3k(n-k)+(3n+4)(j+k-n)+k(n-j)+3(k+j)(n-4)-4j+8\}\\
&=  6\{3j(k-j)+3k(n-k)+(3n+4)(j+k-n)+k(n-j)+3(k+j)(n-3)-7j-3k\\
& \quad +8\}>0.  \nonumber
\end{split}
\end{equation}
whenever $n\geq 5$, or $(j,k,n)\in \{(4,4,4), (2,4,4), (3,3,4),(1,3,4),(2,2,4)\}$, or $(j,k,n)\in \{(2,3,3), (1,2,3)\}$, whereas
$Mo(P_{L}(2,2,2))-Mo(C_{L}(2,4))=-24<0$, $Mo(P_{L}(1,1,2))-Mo(C_{L}(2,2))=-24<0$.

\noindent {\bf Case 2}. $n\geq \lfloor\frac{h}{2}\rfloor+1$
$$Mo(P_{L}(j,k,n))-Mo(C_{L}(2,j+k+n-2))=6\{4(j+n)(k-2)+4k(j-1)+8\}>0,$$
whenever $k\geq 2$, whereas $(j,k,n)=(1,1,n)$ $(n\geq 4)$ with  $Mo(P_{L}(1,1,n))-Mo(C_{L}(2,n))=24(1-n)<0$.

Thus, we have $(1)$ If $G\notin \{P_{L}(1,1,h-3), P_{L}(1,2,2), P_{L}(2,2,2)\}$, then $Mo(G)> Mo(C_{L}(2,h-3))$; $(2)$ If $G\in \{P_{L}(1,1,h-3), P_{L}(2,2,2)\}$, then $Mo(G)< Mo(C_{L}(2,h-3))$; $(3)$ If $G\cong P_{L}(1,2,2)$, then $Mo(G)= Mo(C_{L}(2,3))$.

The proof is completed
\end{proof}
\ \notag\

Comparing the Mostar indices of $P_{L}(1,2,2)$ with $P_{L}(1,1,3)$, $P_{L}(2,2,2)$ with $P_{L}(1,1,4)$. By Lemma \ref{l2-3}, we have
$Mo(P_{L}(1,1,3))-Mo(P_{L}(1,2,2))=24(30-32)=-48<0$, and $Mo(P_{L}(1,1,4))-Mo(P_{L}(2,2,2))=24(40-42)=-48<0$.  By Lemma \ref{l3-3}, Theorem \ref{t3-5}, Theorem \ref{t4-4}, Lemma \ref{l5-1} and Lemma \ref{l5-2}, we obtain the third minimum Mostar index of tree-like phenylenes .
\begin{theorem}\label{t5-4}
Let $G\in \mathscr{P}_{h}$, and $G\ncong \{L_{h}, C_{L}(1,h-2), C_{L}(1,h-4,1)\}$. Then $Mo(G)\geq Mo(P_{L}(1,1,h-3))$, with equality iff $G\cong P_{L}(1,1,h-3)$.
\end{theorem}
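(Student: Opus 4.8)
The plan is to stratify $\mathscr{P}_{h}$ by the number $i$ of full-hexagons and to treat $G\in\mathscr{P}_{h,i}$ separately for $i=0$, $i=1$, and $i\geq 2$, reducing $G$ in each case to a configuration already controlled by Lemmas~\ref{l5-1} and~\ref{l5-2}. This mirrors the reduction used in Theorem~\ref{t3-5}, except that I halt the process at one full-hexagon rather than pushing all the way down to a chain; stopping at $\mathscr{P}_{h,1}$ is essential, since collapsing to a chain would only deliver the far weaker bound $Mo(G)>Mo(L_{h})$. Throughout, each reduction step either preserves or decreases $Mo$, so it suffices to bound the reduced graph and to record when an inequality is strict. (For the few small $h$ in which the named graphs coincide or fail to exist, one verifies the claim directly.)

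For $i=0$, $G$ is a phenylene chain distinct from $L_{h}$, $C_{L}(1,h-2)$, $C_{L}(1,h-4,1)$, so Lemma~\ref{l5-1} gives $Mo(G)\geq Mo(C_{L}(2,h-3))$, while Lemma~\ref{l5-2}$(2)$ gives $Mo(P_{L}(1,1,h-3))<Mo(C_{L}(2,h-3))$; hence $Mo(G)>Mo(P_{L}(1,1,h-3))$ strictly. For $i=1$, the three branches of the unique full-hexagon are phenylene chains, so straightening them by Corollary~\ref{c3-2} yields some $P_{L}(j,k,n)$ with $j\leq k\leq n$ and $Mo(G)\geq Mo(P_{L}(j,k,n))$. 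Lemma~\ref{l5-2} then splits into cases: if $P_{L}(j,k,n)\notin\{P_{L}(1,1,h-3),P_{L}(1,2,2),P_{L}(2,2,2)\}$ then $Mo(P_{L}(j,k,n))>Mo(C_{L}(2,h-3))>Mo(P_{L}(1,1,h-3))$; the two exceptional graphs $P_{L}(1,2,2)$ (only for $h=6$) and $P_{L}(2,2,2)$ (only for $h=7$) are dispatched by the precomputed comparisons $Mo(P_{L}(1,1,3))<Mo(P_{L}(1,2,2))$ and $Mo(P_{L}(1,1,4))<Mo(P_{L}(2,2,2))$ recorded just before the theorem; and the remaining possibility is exactly $P_{L}(j,k,n)=P_{L}(1,1,h-3)$. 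So $Mo(G)\geq Mo(P_{L}(1,1,h-3))$ for every $G\in\mathscr{P}_{h,1}$.

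For $i\geq 2$, I reduce the number of full-hexagons one at a time down to $1$. Regarding the full-hexagons as the degree-three nodes of the underlying hexagon tree, the minimal subtree joining all full-hexagons has a leaf $R$ that is itself a full-hexagon; two of the three branches at $R$ then contain no further full-hexagon and are therefore phenylene chains. Viewing $R$ together with its third branch as the base phenylene $P$ (in which $R$ is a terminal hexagon), I straighten the two chain-branches by Corollary~\ref{c3-2} and merge them by Lemma~\ref{l3-3}; this turns $R$ into a $2$-hexagon and produces $G'\in\mathscr{P}_{h,i-1}$ with $Mo(G)>Mo(G')$ strictly. Iterating until the count drops to $1$ yields $G^{*}\in\mathscr{P}_{h,1}$ with $Mo(G)>Mo(G^{*})\geq Mo(P_{L}(1,1,h-3))$ by the $i=1$ case, so again $Mo(G)>Mo(P_{L}(1,1,h-3))$.

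Assembling the three strata gives $Mo(G)\geq Mo(P_{L}(1,1,h-3))$, and equality can occur only in the stratum $i=1$ with $P_{L}(j,k,n)=P_{L}(1,1,h-3)$. The remaining point, and the main obstacle, is uniqueness: I must show that within $\mathscr{P}_{h,1}$ with branch multiset $\{1,1,h-3\}$ the straightening inequality $Mo(G)\geq Mo(P_{L}(1,1,h-3))$ is strict unless $G$ is already $P_{L}(1,1,h-3)$. Since the two unit branches are automatically linear, this reduces to the equality clause of Lemma~\ref{l3-1}$(1)$ (equivalently Corollary~\ref{c3-2}) applied to the length-$(h-3)$ branch, showing that any genuine turn in that branch strictly increases $Mo$. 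This equality bookkeeping, together with the verification that the $i\geq 2$ reduction really passes through $\mathscr{P}_{h,1}$, is where the care lies; the rest follows by directly combining Lemmas~\ref{l5-1}, \ref{l5-2}, \ref{l3-3} and Corollary~\ref{c3-2}.
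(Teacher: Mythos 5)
Your proposal is correct and takes essentially the same route as the paper: the paper's entire ``proof'' of Theorem~\ref{t5-4} is the one-sentence citation of Lemma~\ref{l3-3}, Theorems~\ref{t3-5} and~\ref{t4-4}, Lemmas~\ref{l5-1} and~\ref{l5-2}, together with the precomputed comparisons $Mo(P_{L}(1,1,3))<Mo(P_{L}(1,2,2))$ and $Mo(P_{L}(1,1,4))<Mo(P_{L}(2,2,2))$, and your stratification by the number of full-hexagons (chains via Lemma~\ref{l5-1}, one full-hexagon via Corollary~\ref{c3-2} and Lemma~\ref{l5-2}, several full-hexagons reduced by Lemma~\ref{l3-3}) is exactly how those ingredients are meant to be assembled. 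If anything, your write-up is more careful than the paper's, since you explicitly flag the equality bookkeeping in the $\mathscr{P}_{h,1}$ stratum and the need to check that Lemma~\ref{l3-3} is applicable in the reduction from $i\geq 2$ full-hexagons, points the paper passes over in silence.
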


\section{More about (edge) Mostar indices}
\hskip 0.6cm
In this section, we investegate the correlation between boiling points (BP) of benzenoid hydrocarbons and edge Mostar indices.
The 21 benzenoid hydrocarbons were shown in Figure \ref{fig-5}.
The experimental values of boiling points of benzenoid hydrocarbons of Table 1 were taken from \cite{mila2021}.
The experimental values of  Mostar indices of $21$ benzenoid hydrocarbons of Table 1 were taken from \cite{dgli2021}. With the data of Figure \ref{table9}, scatter plots between BP and edge Mostar indices were shown in Figures \ref{fig-6}. We obtain that the correlation coefficient ($R$) between boiling points and edge Mostar indices is about 0.9647, and
$$BP= 0.9092\times Mo_{e}(G)+252.6.$$

From \cite{dgli2021}, we know that the correction coefficient (R) between boiling ponits of benzenoid hydrocarbons and Wiener index is 0.9642, Mostar index is 0.9573, the first status connectivity index is 0.9677, the second status connectivity index is 0.9165, the first eccentric connectivity index is 0.9315, the second eccentric connectivity index is 0.8263. We compare the correction coefficient of edge Mostar index with other distance-based indices, we find the edge Mostar index is also a good predictor. The boiling points and edge Mostar indices are highly correlated since the correction coefficient (R) between boiling ponits of benzenoid hydrocarbons and edge Mostar index is 0.9647.
It is worth noting that the regression model for the boilding point and edge Mostar index only applies to benzenoid hydrocarbons. We do not know whether it applies to phenylenes, which needs further study.

\begin{figure}[ht!]
  \centering
  \scalebox{.15}[.15]{\includegraphics{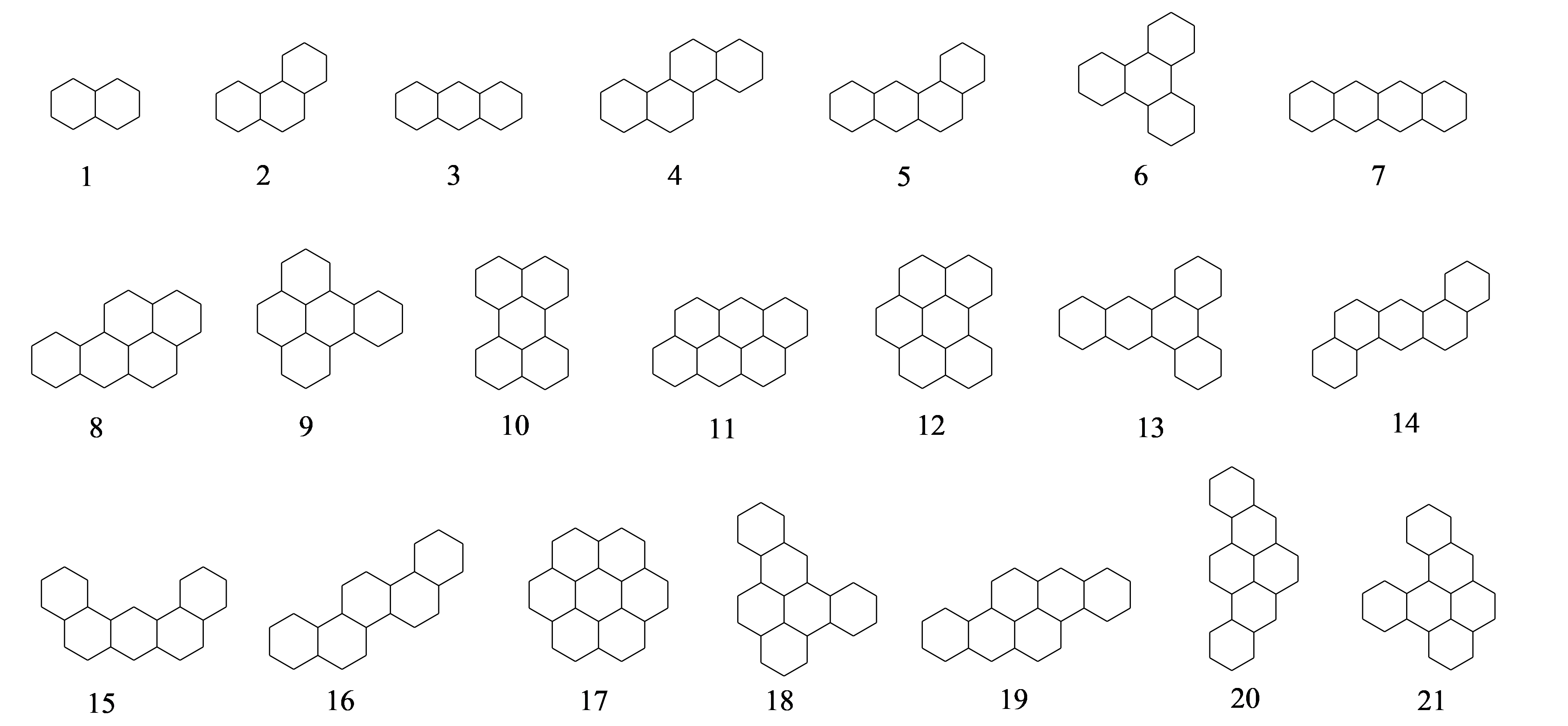}}
  \caption{21 benzenoid hydrocarbons.}
 \label{fig-5}
\end{figure}

\begin{table}[h]
	\centering
    \caption{Experimental values of BP and $Mo_{e}(G)$, $Mo(G)$ of $21$ benzenoid hydrocarbons}
    \setlength{\tabcolsep}{3mm}{
	\begin{tabular}{cccccccc}\hline
	  No.  &  $BP(^{o}C)$  & $Mo(G)$  &	$Mo_{e}(G)$ &  No.   &  $BP(^{o}C)$  & $Mo(G)$  &	$Mo_{e}(G)$     \\  \hline

	$1$    &  $218$  &    $32$  &    $40$   &  $12$   &  $542$  &    $224$  &  $300$         \\  \hline

    $2$    &  $338$  &    $88$  &	  $110$   &    $13$   &  $535$  &    $248$  &  $310$      \\ \hline

    $3$    &  $340$  &    $64$  &	  $80$   &   $14$   &  $536$  &    $232$  &	  $290$     \\ \hline

    $4$    &  $431$  &    $160$  &	  $200$   &  $15$   &  $531$  &    $264$  &	  $330$       \\ \hline

    $5$    &  $425$  &    $160$  &	  $200$  &   $16$   &  $519$  &    $256$  &	  $320$      \\ \hline

    $6$    &  $429$  &    $144$  &	  $180$  &  $17$   &  $590$  &    $252$  &	  $342$      \\ \hline

    $7$    &  $440$  &    $128$  &	  $160$  &  $18$   &  $592$  &    $302$  &	  $390$    \\ \hline

    $8$    &  $496$  &    $198$  &	  $258$  &  $19$   &  $596$  &    $300$  &	  $388$     \\ \hline

    $9$    &  $493$  &    $184$  &	  $240$  &  $20$   &  $594$  &    $300$  &	  $388$    \\ \hline

    $10$   &  $497$  &    $172$  &	  $222$  &  $21$   &  $595$  &    $322$  &	  $412$    \\ \hline

    $11$   &  $547$  &    $236$  &	  $316$       \\ \hline

	\end{tabular}}
	
	\label{table9}
\end{table}

\begin{figure}[ht!]
  \centering
  \scalebox{.06}[.06]{\includegraphics{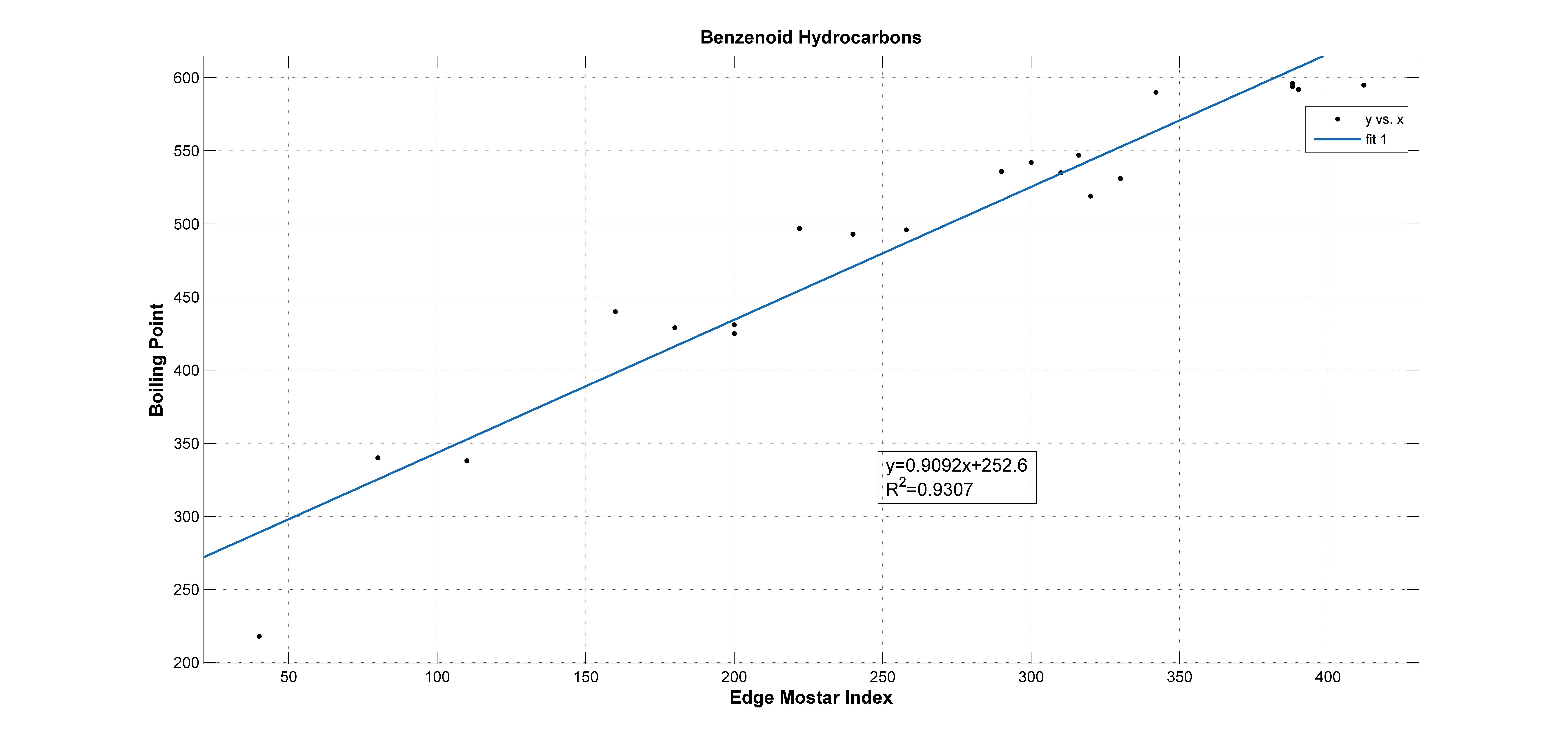}}
  \caption{Scatter plot between BP of benzenoid hydrocarbons and $Mo_{e}(G)$.}
 \label{fig-6}
\end{figure}

\section{Concluding Remarks}
\hskip 0.6cm
Among the research of quantum chemistry, computational chemistry and mathematical chemistry, the research of chemical indices is currently one of the more popular areas, as these chemical indices have proven to have a wide range of applications in QSAR, QSPR relationships for new drug discovery, molecular design, hazard estimation of compounds, numerical coding of chemical structures, database search, prediction of bioactivity, prediction of physicochemical properties of molecular materials.
In this study, we determine the first three minimum values of the Mostar index of tree-like phenylenes with a fixed number of hexagons and characterize all the tree-like phenylenes attaining these values. Quite unexpectedly, the minimum and second minimum tree-like phenylenes are all in the phenylene chains, but the third minimum tree-like phenylenes are not in the phenylene chains. The results could be of some interest to researchers working in chemical applications of graph theory.
\\


\begin{thebibliography}{99}

\bibitem{aocm2019} M. Arockiaraj, J. Clement, N. Tratnik. Mostar indices of carbon nanostructures and circumscribed donut benzenoid systems, \emph{Int. J. Quantum. Chem.} \textbf{119} (2019) \#e26043.

\bibitem{mjnk2020} M. Arockiaraj, J. Clement, N. Tratnik, S. Mushtaq, K. Balasubramanian. Weighted Mostar indices as measures of molecular peripheral shapes with applications to graphene, graphyne and graphdiyne nanoribbons, \emph{SAR QSAR Environ. Res.} \textbf{31} (2020) 187--208.

\bibitem{kbla2020} K. Balasubramanian, Topological peripheral shapes and distance-based characterization of fullerenes C20-C720 : existence of isoperipheral fullerenes, \emph{Polycyclic Aromat. Compd.} doi: 10.1080/10406638.2020.1802303.

\bibitem{frkc1997} F. R. K. Chung, Spectral Graph theory, American Mathematical Society, Providence, RI, USA, 1997.

\bibitem{clxz2021} H. Chen, H. Liu, Q. Xiao, J. Zhang, Extremal phenylene chains with respect to the Mostar index, \emph{Discrete Math., Algor. Applicat.} doi:10.1142/S1793830921500750.

\bibitem{dcz2007} H. Deng, S. Chen, J. Zhang, The PI index of phenylenes, \emph{J. Math. Chem.} \textbf{41} (2007) 63--69.

\bibitem{dngl2020} K. Deng, S. Li, Extremal catacondensed benzenoids with respect to the Mostar index, \emph{J. Math. Chem.} \textbf{58} (2020) 1437--1465.

\bibitem{dgli2021} K. Deng, S. Li, On the extremal values for the Mostar index of trees with given degree sequence, \emph{Appl. Math. Comput.} \textbf{390} (2021) \#125598.

\bibitem{dmstz2018} T. Do\v{s}li\'{c}, I. Marthinjak, R. \v{S}krekovski, S. Tipuri\'{c} Spu\v{z}evi\'{c}, I. Zubac, Mostar index, \emph{J. Math. Chem.} \textbf{56} (2018) 2995--3013.

\bibitem{fgzv2002} B. Furtula, I. Gutman, T. Zeljko, A. Vesel, I. Pesek, Wiener-type topological indices of phenylenes, \emph{Indian. J. Chem. A.} \textbf{41} (2002) 1767--1772.

\bibitem{gufa2017} I. Gutman, B. Furtula, The total $\pi$-electron energy saga, \emph{Croat. Chem. Acta.} \textbf{90} (2017) 359--368.

\bibitem{gtfu2006} I. Gutman, B. Furtula, A Kekul\'{e} structure basis for phenylenes, \emph{J. Mol. Struct.: THEOCHEM,} \textbf{770} (2006) 67--71.

\bibitem{gupk1996} I. Gutman, P. Petkovi\'{c}, P. V. Khadikar, Bounds for the total $\pi$-electron energy of phenylenes, \emph{Rev. Roum. Chim.} \textbf{41} (1996) 637--643.

\bibitem{guar2008} I. Gutman, A. R. Ashrafi, On the PI index of phenylenes and their hexagonal squeezes, \emph{MATCH Commun. Math. Comput. Chem.} \textbf{60} (2008) 135--142.

\bibitem{gtto2001} I. Gutman, \v{Z}. Tomovi\'{c}, Cyclic conjugation in terminally bent and branched phenylenes, \emph{Indian. J. Chem. A.} \textbf{40} (2001) 678--681.

\bibitem{gaxu2020} F. Gao, K. Xu, T. Do\v{s}li\'{c}, On the difference of Mostar index and irregularity of graphs, \emph{Bull. Malays. Math. Sci. Soc.} \textbf{44} (2021) 905--926.

\bibitem{hlzh2020} S. Huang, S. Li, M. Zhang, On the extremal Mostar indices of hexagonal chains, \emph{MATCH Commun. Math. Comput. Chem.} \textbf{84} (2020) 249--271.

\bibitem{hyzu2019} F. Hayat, B. Zhou, On Mostar index of trees with parameters, \emph{Filomat}, \textbf{33} (2019) 6453--6458.

\bibitem{hyuz2019} F. Hayat, B. Zhou, On cacti with large Mostar index, \emph{Filomat}, \textbf{33} (2019) 4865--4873.

\bibitem{iari2020} M. Imran, S. Akhter, Z. Iqbal, Edge Mostar index of chemical structures and nanostructures using graph operations, \emph{Int. J. Quantum Chem.} \textbf{120} (2020) \#e26259.

\bibitem{lfan2021}  H. Liu, X. Fang, Extremal phenylene chains with respect to detour indices, \emph{J. Appl. Math. Comput.} \textbf{67} (2021) 301--316.

\bibitem{lsxt2020}  H. Liu, L. Song, Q. Xiao, Z. Tang, On edge Mostar index of graphs, \emph{Iranian J. Math. Chem.} \textbf{11} (2020) 95--106.

\bibitem{lzch2020}  J. B. Liu, Q. Zheng, Z. Q. Cai, S. Hayat, On the Laplacians and normalized Laplacians for graph transformation with respect to the dicyclobutadieno derivative of [n] Phenylenes, \emph{Polycyclic Aromat. Compd.} doi:10.1080/10406638.2020.1781209.

\bibitem{mila2021} Milano Chemometrics \& QSAR Research Group, Molecular Descriptors: the free online
        resource, Milano Chemometrics and QSAR Research Group, http://www .moleculardescriptors.eu/dataset/dataset.htm. Accessed Dec 2017.

\bibitem{pagu1997} L. Pavlovic, I. Gutman, Wiener numbers of phenylenes: an exact result, \emph{J. Chem. Inf.
Comput. Sci} \textbf{37} (1997) 355--358.

\bibitem{teph2019} A. Tepeh, Extremal bicyclic graphs with respect to Mostar index, \emph{Appl. Math. Comput.} \textbf{355} (2019) 319--324.

\bibitem{tank2020} N. Tratnik, Computing the Mostar index in networks with applications to molecular graphs, \emph{Iranian J. Math. Chem.} \textbf{12} (2021) 1--18.

\bibitem{kpcv1993} K. P. C. Vollhardt, The phenylenes, \emph{Pure Appl. Chem.} \textbf{65} (1993) 153--156.

\bibitem{xztd2021}  Q. Xiao, M. Zeng, Z. Tang, H. Deng, H. Hua, Hexagonal chains with the first three minimal Mostar indices, \emph{MATCH Commun. Math. Comput. Chem.} \textbf{85} (2021) 47--61.

\bibitem{xzdt2021}  Q. Xiao, M. Zeng, Z. Tang, H. Hua, H. Deng, The hexagonal chains with the first three maximal Mostar indices, \emph{Discrete Appl. Math.} \textbf{288} (2021) 180-191.

\bibitem{zzli2019}  Z. Zhu, J. B. Liu, The normalized Laplacian, degree-Kirchhoff index and the spanning tree numbers of generalized phenylenes, \emph{Discrete Appl. Math.} \textbf{254} (2019) 256-267.







\end{thebibliography}
\end{document}